\newtheorem*{thmA}{Theorem A}
\newtheorem*{thmB}{Theorem B}
\newtheorem*{thmC}{Theorem C}
\newtheorem*{thmD}{Theorem D}
\newtheorem*{thmE}{Theorem E}
\newtheorem*{prop11}{Proposition 11}
\newtheorem*{cor12}{Corollary 12}
\newtheorem*{prop13}{Proposition 13}
\newtheorem*{cor14}{Corollary 14}
\newtheorem{proposition}{Proposition}
\newtheorem{cor}[proposition]{Corollary}
\newtheorem{lemma}[proposition]{Lemma}
\newtheorem{remark}[proposition]{Remark}
\renewcommand{\AA}{\mathcal A}
\newcommand{\N}{\mathbb N}
\newcommand{\C}{\mathbb C}
\newcommand{\F}{\mathbb F}
\newcommand{\BB}{\mathcal B}
\newcommand{\HH}{\mathcal H}
\newcommand{\KK}{\mathcal K}
\newcommand{\comment}[1]{}
\title[A note on full free product $C^*$-algebras]{A note on full free product $C^*$-algebras, \\  lifting and quasidiagonality}
\author{Florin P. Boca}
\address{University of Wales Swansea, Department of Mathematics,
Swansea SA2 8PP, U.K.}
\address{On leave from:}
\address{Institute of Mathematics of the Romanian Academy,
P.O. Box 1-764, 70700 Bucharest, ROMANIA}
\keywords{$C^*$-algebras, tensor products, lifting, quasidiagonality}
\subjclass{46M05}
\begin{document}


\begin{abstract}
We study lifting properties for full product $C^*$-algebras with amalgamation over $\C 1$ and give new proofs for some results of Kirchberg and Pisier. We extend the
result of Choi on the quasidiagonality of $C^* (\F_n)$, proving that the free product with amalgamation over $\C 1$ of a
family of unital quasidiagonal $C^*$-algebras is quasidiagonal.
\end{abstract}

\maketitle

All $C^*$-algebras in this note are unital. By representations and respectively morphisms we mean $\ast$-representations of $C^*$-algebras on Hilbert spaces,
respectively $\ast$-morphisms between $C^*$-algebras. If $A$ and $B$ are $C^*$-algebras, $A\odot B$ denotes the tensor product of $A$ and $B$ viewed as a $\ast$-algebra,
$A\underset{\operatorname{min}}{\otimes} B$ the minimal $C^*$-tensor product and $A\underset{\operatorname{max}}{\otimes} B$ the maximal one. If $M$ is a von Neumann algebra,
$A\underset{\operatorname{nor}}{\otimes} M$ denotes the normal $C^*$-tensor product (see \cite{Was}).

Throughout this paper, $F$ will denote a free group (not necessarily countable) and $C^*(F)$ its full group $C^*$-algebra.

Let $A$ and $B$ be $C^*$-algebras, $J$ be an ideal of $B$ and $\pi:B \rightarrow B/J$ the canonical quotient morphism. A unital completely positive
(ucp) map $\Phi:A \rightarrow B/J$ is \emph{liftable} if there exists a ucp map $\widetilde{\Phi}:A\rightarrow B$ such that
$\pi\widetilde{\Phi} =\Phi$ and \emph{locally liftable} if for any finite dimensional operator system $X\subset A$, there exists a ucp
map $\widetilde{\Phi}_X:X \rightarrow B$ such that $\pi\widetilde{\Phi}_X =\Phi\vert_X$. A $C^*$-algebra has the \emph{lifting property} (LP),
respectively the \emph{local lifting property} (LLP) if any ucp map $\Phi:A \rightarrow B/J$ is liftable, respectively locally liftable, for any
$C^*$-algebra $B$ any any ideal $J$ of $B$.

Significant connections between these lifting properties and $C^*$-tensor products have been established by Kirchberg who proved the following two important results:

\begin{thmA}[\cite{Kir2}, Theorem 1.1]
$A\underset{\operatorname{nor}}{\otimes} M=A\underset{\operatorname{max}}{\otimes} M$ for any von Neumann algebra $M$ and any separable $C^*$-algebra $A$ with {\rm LP}.
\end{thmA}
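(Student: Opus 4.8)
The plan is to prove that the canonical $*$-epimorphism $q\colon A\underset{\operatorname{max}}{\otimes}M\to A\underset{\operatorname{nor}}{\otimes}M$ is injective. This epimorphism exists because every normal representation of $M$ is in particular a representation, so that $\|\cdot\|_{\operatorname{nor}}\le\|\cdot\|_{\operatorname{max}}$ on the algebraic tensor product $A\odot M$; concretely, injectivity of $q$ means that for every pair of representations $\pi_A\colon A\to B(H)$ and $\pi_M\colon M\to B(H)$ with commuting ranges, the induced representation $\pi=\pi_A\times\pi_M$ of $A\odot M$ satisfies $\|\pi(x)\|\le\|x\|_{\operatorname{nor}}$ for all $x\in A\odot M$. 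Fix such a pair, an element $x=\sum_{i=1}^{n}a_i\otimes m_i\in A\odot M$ and $\varepsilon>0$; the separability of $A$ will enter only at the very end.

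The first step is to separate off the part of $\pi_M$ that is automatically normal. Put $N:=\pi_M(M)''\subseteq B(H)$; since $\pi_A(A)\subseteq\pi_M(M)'$, also $\pi_A(A)\subseteq N'$. The normal extension $\bar\pi_M\colon M^{**}\to N$, together with the central splitting $M^{**}=M\oplus M^{\perp}$ (here $M^{\perp}$ is the kernel of the canonical normal surjection $M^{**}\to M$), yields a central projection $z\in N\subseteq\pi_A(A)'$ such that $zH$ reduces $\pi_A$ and $\pi_M$, the compression of $\pi_M$ to $zH$ is normal, and the compression of $\pi_M$ to $(1-z)H$ factors through the \emph{singular} $*$-homomorphism $M\to M^{\perp}$. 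On $zH$ the pair $(\pi_A,\pi_M)$ is of the type defining the nor-norm, so $\|z\pi(x)\|\le\|x\|_{\operatorname{nor}}$, and one is reduced to the case in which $\pi_M$ is entirely singular.

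Handling this singular part is the crux, and it is here that the lifting property of $A$ is used. The idea is to realize a singular representation of $M$ through a quotient map $\BB\to\BB/\II$ of an ``asymptotic'' $C^{*}$-algebra built from $M$ --- a suitable quotient of $\ell^{\infty}(\N,M)$ --- arranged so that $M$ embeds into $\BB$ through a \emph{normal} map while the prescribed singular representation is recovered on the quotient side. The commuting representation $\pi_A$ then amounts to a ucp map $\Phi\colon A\to\BB/\II$ whose range commutes with the image of $M$ in $\BB/\II$; the LP of the separable $C^{*}$-algebra $A$ furnishes a ucp lift $\widetilde\Phi\colon A\to\BB$. Since $M$ acts normally on $\BB$, plugging $\widetilde\Phi$ and the normal copy of $M$ into the definition of $A\underset{\operatorname{nor}}{\otimes}M$ produces a bound of the required type, giving $\|(1-z)\pi(x)\|\le\|x\|_{\operatorname{nor}}+\varepsilon$. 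Combined with the estimate on $zH$ this yields $\|\pi(x)\|\le\|x\|_{\operatorname{nor}}+\varepsilon$ for every $\varepsilon>0$, hence $\|x\|_{\operatorname{max}}\le\|x\|_{\operatorname{nor}}$ and $q$ is injective.

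The main obstacle is that the lift $\widetilde\Phi$ supplied by LP need not have range commuting with the normal copy of $M$ inside $\BB$ --- only its image in $\BB/\II$ does. Repairing this requires a perturbation and averaging argument, run along an increasing exhaustion of $A$ by finite-dimensional operator systems (and a correspondingly expanding finite amount of data from $M_*$), which is precisely the place where separability of $A$ is genuinely used: it lets one manufacture from $\widetilde\Phi$ a new ucp map into $\BB$ that commutes with $M$ up to an error that is negligible on $x$. Constructing the correct asymptotic algebra $\BB$ and carrying through this commutation correction form the technical heart of the proof, and they are also the reason one needs the \emph{global} lifting property rather than merely its local version.
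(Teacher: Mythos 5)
Your reduction is correct as far as it goes: injectivity of $q$ is indeed equivalent to the estimate $\|\pi(x)\|\le\|x\|_{\operatorname{nor}}$ for all commuting pairs $(\pi_A,\pi_M)$, and the splitting of $\pi_M$ into a normal and a singular part by means of the central projection $z$ coming from the decomposition of $M^{**}$ (and lying in $\pi_M(M)''\subseteq\pi_A(A)'$) is standard and does give $\|z\pi(x)\|\le\|x\|_{\operatorname{nor}}$. But that is the routine half; the entire content of Kirchberg's theorem sits in the singular case, and there your text stops being a proof. Concretely, three steps are missing, not merely their details: (a) the construction of the pair $\II\subset\BB$ --- you never specify the ``suitable quotient of $\ell^\infty(\N,M)$'', in what sense $M$ sits ``normally'' inside $\BB$ (normality is a property of representations on Hilbert space, so you need an explicit representation of $\BB$ in which the copy of $M$ acts normally and commutes with whatever will replace $\pi_A$), nor why the prescribed singular representation on $(1-z)H$ is recovered from $\BB/\II$; (b) the claim that $\pi_A$ ``amounts to'' a ucp map $\Phi\colon A\to\BB/\II$ whose range commutes with the image of $M$ --- a priori $\pi_A$ takes values among the operators on $(1-z)H$, not in $\BB/\II$, and transporting it into the quotient while preserving the commutation is precisely the nontrivial interaction between the singular representation and the commutant; (c) the correction of the lift $\widetilde\Phi$ so that its range commutes with the copy of $M$ inside $\BB$, which you yourself label the technical heart and leave entirely open. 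Even granting (a)--(c), you would still need to check that the nor-norm bound on $\sum_i\widetilde\Phi(a_i)\,m_i$ computed in $\BB$ passes through the quotient so as to dominate $\|(1-z)\pi(x)\|$. As written, the singular-part argument is an announcement of Kirchberg's original strategy rather than a proof of it.

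For comparison, the paper reaches Theorem A by a different and much shorter route that avoids the normal/singular decomposition and any asymptotic algebra. Write $A=C^*(F)/J$ for a free group $F$; LP gives a ucp lifting of $\operatorname{id}_A$. The equality $C^*(F)\underset{\operatorname{nor}}{\otimes}M=C^*(F)\underset{\operatorname{max}}{\otimes}M$ (Pisier's Theorem D, reproved in the text via Haagerup's decomposable norm \cite{Haa} and Bo\v{z}ejko's ucp extension $U_j\mapsto a_j$ \cite{Boz}) is transferred to the ideal $J$ by Lemma 1; Lemma 3(ii), an Effros--Haagerup style argument using the lifting of $\operatorname{id}_A$, identifies the quotient norm on $A\odot M$ with the nor-norm; and exactness of the maximal tensor product sequence concludes. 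Note that this argument uses only the liftability of $\operatorname{id}_A$ and no separability at all (see the remark following Corollary 4), whereas your outline genuinely needs both LP and separability exactly at the steps you have not supplied; if you wish to pursue your outline, those steps are the hard core of Kirchberg's original proof and must be written out.
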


\begin{thmB}[\cite{Kir1}, Proposition 2.2]
If $A$ is a $C^*$-algebra, then $A\underset{\operatorname{min}}{\otimes} \BB (\HH)=A\underset{\operatorname{max}}{\otimes} \BB (\HH)$ if and only if $A$ has {\rm LLP}.
\end{thmB}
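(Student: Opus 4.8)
The plan is to prove the two implications separately, in both cases through a presentation of $A$ as a quotient of a full free group $C^*$-algebra. Since a unital $C^*$-algebra is linearly spanned by its unitaries, there are a free group $F$ and a surjective morphism $q\colon C^*(F)\to A$; let $\ker q$ be its kernel. I shall use the (standard) fact that $C^*(F)$ has the {\rm LP}, and the ensuing reformulation: \emph{$A$ has {\rm LLP} if and only if for every finite-dimensional operator system $X\subseteq A$ there is a ucp map $u\colon X\to C^*(F)$ with $q\circ u=\operatorname{id}_X$.} (``Only if'': apply the definition of {\rm LLP} to $\operatorname{id}_A\colon A\to C^*(F)/\ker q$. ``If'': given $\Phi\colon A\to B/J$ ucp and $X\subseteq A$ finite dimensional, pick such a $u$, use the {\rm LLP} of $C^*(F)$ to lift $\Phi\circ q$ on the finite-dimensional system $u(X)$ to a ucp map $w$, and note that $w\circ u$ lifts $\Phi|_X$.) I shall also use that $\BB(\HH)$ is injective, hence has the {\rm WEP}, so that $C^*(F)\underset{\min}{\otimes}\BB(\HH)=C^*(F)\underset{\max}{\otimes}\BB(\HH)$ (Lance's characterisation of the {\rm WEP}; alternatively Theorem A after reducing to a countable $F$), and that $X\subseteq A$ yields an isometric inclusion $X\underset{\min}{\otimes}\BB(\HH)\subseteq A\underset{\min}{\otimes}\BB(\HH)$.

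For the implication {\rm LLP} $\Rightarrow$ ($\min=\max$), fix $z=\sum_{i=1}^{n}a_i\otimes b_i\in A\odot\BB(\HH)$, put $X=\operatorname{span}\{1,a_1,\dots,a_n,a_1^{*},\dots,a_n^{*}\}$, and choose a ucp section $u\colon X\to C^*(F)$ with $q\circ u=\operatorname{id}_X$. Since $*$-homomorphisms are contractive for $\underset{\max}{\otimes}$ and ucp maps are contractive for $\underset{\min}{\otimes}$ after tensoring with $\operatorname{id}_{\BB(\HH)}$,
\[
\bigl\|z\bigr\|_{A\underset{\max}{\otimes}\BB(\HH)}
\le\Bigl\|\textstyle\sum_i u(a_i)\otimes b_i\Bigr\|_{C^*(F)\underset{\max}{\otimes}\BB(\HH)}
=\Bigl\|\textstyle\sum_i u(a_i)\otimes b_i\Bigr\|_{C^*(F)\underset{\min}{\otimes}\BB(\HH)}
\le\bigl\|z\bigr\|_{A\underset{\min}{\otimes}\BB(\HH)},
\]
the first step applying $q\otimes\operatorname{id}$, the middle equality being the $C^*(F)$-case above, and the last step applying $u\otimes\operatorname{id}$ together with the isometric inclusion of $X\underset{\min}{\otimes}\BB(\HH)$. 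The reverse inequality is automatic, so $\min=\max$ on $A\odot\BB(\HH)$ and hence on its completion. (The same argument gives $A\underset{\min}{\otimes}B=A\underset{\max}{\otimes}B$ for every $B$ with the {\rm WEP}.)

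The converse, ($\min=\max$) $\Rightarrow$ {\rm LLP}, is the substantial direction. By the reformulation it suffices, for each finite-dimensional operator system $X\subseteq A$, to produce a ucp map $u\colon X\to C^*(F)$ with $q\circ u=\operatorname{id}_X$. I would argue by Hahn--Banach in the finite-dimensional space of linear maps $X\to A$: the set $\{\,q\circ v: v\ \text{a ucp map}\ X\to C^*(F)\,\}$ is convex, and one wants $\operatorname{id}_X$ to lie in its closure, the closure then being removed by perturbing an approximate lift into a genuine ucp one (this is where $\dim X<\infty$ is used). If $\operatorname{id}_X$ could be strictly separated, a functional $\ell$ would witness it; the crux is to recognise $\sup\{\operatorname{Re}\ell(q\circ v):v\ \text{ucp},\ X\to C^*(F)\}$ as a maximal-type tensor expression while $\operatorname{Re}\ell(\operatorname{id}_X)$ is controlled by the corresponding minimal-type one --- the bridge being the {\rm WEP} of $\BB(\HH)$, which (via an embedding $C^*(F)\subseteq\BB(\HH)$ and an Arveson extension) lets a ucp map into $C^*(F)$ be encoded by an element of the unit ball of $A\underset{\min}{\otimes}\BB(\HH)$ --- together with the hypothesis $A\underset{\min}{\otimes}\BB(\HH)=A\underset{\max}{\otimes}\BB(\HH)$, which forces the two sides of the putative separation to coincide, so that no strict separation can occur. \emph{The main obstacle is precisely assembling this dictionary}: matching ``the inclusion $X\hookrightarrow A$ lifts through $q$'' with ``every functional of the relevant shape bounded for $\underset{\min}{\otimes}\BB(\HH)$ is automatically bounded for $\underset{\max}{\otimes}\BB(\HH)$'', and checking that the approximate lift extracted from the failure of separation can be corrected to an honest ucp map on the finite-dimensional $X$; once that bookkeeping is in place the hypothesis closes the argument.
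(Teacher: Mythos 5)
Your forward half (LLP $\Rightarrow$ $\min=\max$) is correct and is close in spirit to the paper's route: your ucp section $u\colon X\to C^*(F)$ with $q\circ u=\operatorname{id}_X$ is exactly condition (ii) of Proposition 6, and your direct estimate with $q\otimes\operatorname{id}$ on the max side and $u\otimes\operatorname{id}$ on the min side bypasses the paper's quotient/exact-sequence bookkeeping (Lemmas 1 and 3). One citation needs fixing, though: the equality $C^*(F)\underset{\operatorname{min}}{\otimes}\BB(\HH)=C^*(F)\underset{\operatorname{max}}{\otimes}\BB(\HH)$ is not a formal consequence of the injectivity/WEP of $\BB(\HH)$ via Lance; it is precisely Kirchberg's theorem (Theorem C, with Pisier's proof), which is the external input both you and the paper rely on.

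The converse, which as you say is the substantial direction, is not actually proved in your proposal. You reduce it to producing ucp maps $u\colon X\to C^*(F)$ with $q\circ u=\operatorname{id}_X$ and sketch a Hahn--Banach separation, but you explicitly leave open the two essential steps: translating the separation data into a statement about the min and max norms on $A\odot\BB(\HH)$ (this ``dictionary'' is the Effros--Haagerup lifting criterion) and correcting an approximate lift into an honest ucp lift on the finite-dimensional $X$. That is exactly the content the paper supplies in Proposition 6, (iii)$\Rightarrow$(i): after reducing the target algebra to a $C^*(F')$, one takes a self-adjoint linear lifting $\Psi$ of $\Phi|_E$, forms $\Psi_\iota(x)=(1-e_\iota)^{1/2}\Psi(x)(1-e_\iota)^{1/2}$ with a central approximate unit of the ideal, and proves $\lim_\iota\|\Psi_\iota\|_{\operatorname{cb}}=1$ by assembling matricial counterexamples into $C=\oplus_\iota M_{n_\iota}(\C)$, transporting the hypothesis from $\BB(\HH)$ to $C$ via a conditional expectation, and using Lemma 1 together with exactness of the max sequence; the standard perturbation argument (Wassermann, Proposition 6.8) then yields the ucp lift of $\Phi|_E$ for an arbitrary ucp $\Phi\colon A\to B/J$. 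Note also that your passage from local sections of $\operatorname{id}_A$ back to full LLP invokes the LP of $C^*(F)$ as a ``standard fact''; that is itself a nontrivial theorem of Kirchberg (\cite{Kir2}, Lemma 3.3), which the paper deliberately avoids, and if you tried instead to obtain the LLP of $C^*(F)$ from the very equivalence being proved (via Theorem C) the argument would be circular. So as written, the hard implication of Theorem B is either missing or deferred to a result of comparable depth.
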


In a recent paper (\cite{Pis}, Theorem 0.1 and 1.24) Pisier gave direct and elegant proofs of:

\begin{thmC}
$C^*(F)\underset{\operatorname{min}}{\otimes} \BB(\HH)=C^*(F)\underset{\operatorname{max}}{\otimes} \BB(\HH)$.
\end{thmC}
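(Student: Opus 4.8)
The plan is to derive the identity from Kirchberg's Theorem~B, which reduces it to showing that $C^*(F)$ has the {\rm LLP}. Both the minimal and the maximal tensor product commute with inductive limits of $C^*$-algebras, and $C^*(F)=\overline{\bigcup_{S_0}C^*(F_{S_0})}$ is the inductive limit over its finitely generated free subgroups $F_{S_0}\le F$; so I would first reduce to $F=F_n$ with $n$ finite, and then (Theorem~B again) it suffices to show $C^*(F_n)$ has the {\rm LLP}, for which I will prove the stronger {\rm LP}. The crucial structural remark is that $C^*(F_n)$ is a full free product amalgamated over $\C1$: a unitary representation of $F_n$ is just an $n$-tuple of unitaries, so the universal property gives $C^*(F_n)\cong C^*(\Z)\underset{\C1}{\ast}\cdots\underset{\C1}{\ast}C^*(\Z)$, an amalgamated free product over $\C1$ of $n$ copies of $C(\mathbb T)$. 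Since $C(\mathbb T)$ is separable and nuclear it has the {\rm LP} by the Choi--Effros lifting theorem, so everything reduces to one lemma, to be applied to two factors at a time: \emph{a full free product amalgamated over $\C1$ of unital $C^*$-algebras with the {\rm LP} has the {\rm LP}.}

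To prove this I would proceed as follows. Let $J\trianglelefteq B$ with quotient $\pi:B\to B/J$, and let $\Phi:A:=A_1\underset{\C1}{\ast}A_2\to B/J$ be ucp; after shrinking $B$ to $\pi^{-1}\bigl(C^*(\Phi(A))\bigr)$ I may assume $\Phi$ surjective. Fix a Stinespring dilation $\Phi=V^*\sigma(\cdot)V$, with $\sigma:A\to\BB(\HH)$ a representation --- necessarily the common extension of $\sigma_i:=\sigma|_{A_i}$ --- and $V$ an isometry, and put $p:=VV^*$, so that $\Phi(\cdot)=p\,\sigma(\cdot)\,p$ after identifying $B/J$ with its image on $p\HH$. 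Set $\CC:=C^*\bigl(\sigma(A)\cup\{p\}\bigr)\subseteq\BB(\HH)$; since $\Phi$ is surjective, $p\,\CC\,p=B/J$. The key step is to \emph{extend the given extension $J\to B\xrightarrow{\pi}B/J$ along the corner embedding $B/J=p\CC p\subseteq\CC$}: produce a $C^*$-algebra $\widetilde\CC$, a surjection $q:\widetilde\CC\to\CC$, and a projection $\widetilde p\in\widetilde\CC$ with $q(\widetilde p)=p$, such that $\widetilde p\,\widetilde\CC\,\widetilde p=B$ and $q$ restricts to $\pi$ on $\widetilde p\,\widetilde\CC\,\widetilde p$. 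Granting this, each $\sigma_i:A_i\to\CC$ is in particular a unital completely positive map, so by the {\rm LP} of $A_i$ it lifts to a ucp map $\widetilde{\sigma}_i:A_i\to\widetilde\CC$ with $q\widetilde{\sigma}_i=\sigma_i$. I would then form the free product ucp map $\Psi:=\widetilde{\sigma}_1\ast\widetilde{\sigma}_2:A\to\widetilde\CC$ --- realised as the amalgamated free product over $\widetilde\CC$ of the Kasparov--Stinespring Hilbert $\widetilde\CC$-module dilations of $\widetilde{\sigma}_1$ and $\widetilde{\sigma}_2$, which is legitimate since these maps are unital and which genuinely takes values in $\widetilde\CC$. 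By naturality of this construction under $\ast$-homomorphisms of the coefficient algebra, $q\Psi=(q\widetilde{\sigma}_1)\ast(q\widetilde{\sigma}_2)=\sigma_1\ast\sigma_2$, and the free product of $\ast$-homomorphisms with a common target equals their common extension, so $q\Psi=\sigma$. Therefore $\widetilde\Phi:=\widetilde p\,\Psi(\cdot)\,\widetilde p:A\to\widetilde p\,\widetilde\CC\,\widetilde p=B$ is ucp and $\pi\widetilde\Phi=q\bigl(\widetilde p\,\Psi(\cdot)\,\widetilde p\bigr)=p\,(q\Psi)(\cdot)\,p=p\,\sigma(\cdot)\,p=\Phi$; that is, $\widetilde\Phi$ lifts $\Phi$, proving the lemma. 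Tracing the reductions back and applying Theorem~B once more yields $C^*(F)\underset{\operatorname{min}}{\otimes}\BB(\HH)=C^*(F)\underset{\operatorname{max}}{\otimes}\BB(\HH)$.

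The hard part is the passage through the free product. The two lifts furnished by the {\rm LP} of $A_1$ and $A_2$ cannot simply be glued, because a ucp map on a free product is not determined by its restrictions to the factors --- it also records the values on reduced words --- and the ``freest'' recombination of the two lifts would lift $(\Phi|_{A_1})\ast(\Phi|_{A_2})$ rather than the prescribed $\Phi$. The device above circumvents this: one lifts not $\Phi$ but the Stinespring representation $\sigma$, which carries all of $\Phi$ yet restricts on each factor to a mere ucp map into $\CC$, so that the {\rm LP} hypothesis applies, and then recovers $\widetilde\Phi$ by compression. Within this argument the genuinely delicate point is the construction of the coefficient algebra $\widetilde\CC$ --- a $C^*$-algebra over $\CC$ whose corner at a projection over $p$ \emph{is} the extension $J\to B\to B/J$ --- which I would obtain by an extension-theoretic argument (replace $\CC$ by the ideal generated by $p$, in which $p$ becomes full, transport the extension across the resulting stable isomorphism of corners, and reassemble); it is here that amalgamation over $\C1$ is used, keeping $\sigma$, the $\widetilde{\sigma}_i$, and the compressions unital. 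Finally, when $F$ is countable there is a shortcut avoiding the {\rm LLP} machinery: $C^*(F)$ is then separable with the {\rm LP}, and since $A\underset{\operatorname{nor}}{\otimes}\BB(\HH)=A\underset{\operatorname{min}}{\otimes}\BB(\HH)$ for every $A$ (the normal representations of $\BB(\HH)$ being the amplifications of the identity), Theorem~A applies directly.
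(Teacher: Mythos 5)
Your reduction is circular at its central step. Theorem C is not something this paper proves; it is quoted as Kirchberg's theorem, with Pisier's direct proof, and the whole architecture of the note is to derive Theorems A and B (and E) \emph{from} Theorems C and D. The implication of Theorem B that you invoke --- LLP implies $A\underset{\operatorname{min}}{\otimes}\BB(\HH)=A\underset{\operatorname{max}}{\otimes}\BB(\HH)$ --- is proved (Corollary 4(i) here, and likewise in Kirchberg's and Pisier's treatments) by writing $A=C^*(F)/J$, lifting finite dimensional operator systems to $C^*(F)$, and then using precisely the equality $C^*(F)\underset{\operatorname{min}}{\otimes}\BB(\HH)=C^*(F)\underset{\operatorname{max}}{\otimes}\BB(\HH)$, i.e.\ Theorem C itself; the same objection applies to your ``shortcut'' through Theorem A, whose proof (here via Theorem D, and not more cheaply elsewhere) is of the same depth. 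So ``$C^*(F_n)$ has the LP, hence LLP, hence Theorem B gives min $=$ max'' assumes the statement being proved. What a genuine proof of Theorem C requires is the direct argument of Pisier/Kirchberg (factorization of $(x_j)$ with $\big\|\sum_j U_j\otimes x_j\big\|_{\operatorname{min}}<1$, decomposable norms in the sense of Haagerup, Bo\.{z}ejko's ucp extension of $U_j\mapsto a_j$), of which there is no trace in your proposal.

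The sound part of your plan --- $C^*(F)$ is the free product over $\C 1$ of copies of $C(\mathbb{T})$, each has the LP by Choi--Effros, and free products amalgamated over $\C 1$ preserve the LP --- is exactly Proposition 7 of the paper, and it does give the LP of $C^*(F)$ (Kirchberg's Lemma 3.3); but your proof of that lemma is both more complicated than necessary and gappy. The unjustified step is the existence of $\widetilde{\CC}\twoheadrightarrow\CC$ with a projection $\widetilde{p}$ over $p$ such that $\widetilde{p}\,\widetilde{\CC}\,\widetilde{p}$ \emph{is} the given extension $J\to B\to B/J$ compatibly with $\pi$: ``transport the extension across the stable isomorphism of corners and reassemble'' is not an argument ($p$ need not be full in $\CC$, Brown-type stabilization needs $\sigma$-unitality/separability, and one must still match the Busby data so that $q$ restricts to $\pi$). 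None of this machinery is needed: liftability of all ucp maps into quotients is equivalent to liftability of $\operatorname{id}_A$ (pull the extension back along $\Phi$), and $\operatorname{id}_A$ is lifted directly by choosing ucp lifts $\Phi_i:A_i\to\pi^{-1}(A_i)\subset B$ of $\operatorname{id}_{A_i}$ and forming Boca's free product ucp map $\Phi(a_1\cdots a_n)=\Phi_{i_1}(a_1)\cdots\Phi_{i_n}(a_n)$; since $\pi$ is multiplicative, $\pi\Phi=\operatorname{id}_A$ on reduced words, which disposes of the very difficulty your Stinespring detour was built to circumvent. Even with that lemma repaired, however, the argument only establishes the LP of $C^*(F)$, not Theorem C.
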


\begin{thmD}
$C^*(F)\underset{\operatorname{nor}}{\otimes} M=C^*(F)\underset{\operatorname{max}}{\otimes} M$ for any von Neumann algebra $M$.
\end{thmD}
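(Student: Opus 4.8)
Since $\|\cdot\|_{\mathrm{min}}\le\|\cdot\|_{\mathrm{nor}}\le\|\cdot\|_{\mathrm{max}}$ on $C^*(F)\odot M$ and the canonical morphism $C^*(F)\underset{\mathrm{max}}{\otimes}M\to C^*(F)\underset{\mathrm{nor}}{\otimes}M$ is onto, the assertion is equivalent to the inequality $\|x\|_{\mathrm{max}}\le\|x\|_{\mathrm{nor}}$ for every $x\in C^*(F)\odot M$. The plan is to reduce an arbitrary von Neumann algebra $M$ to the case $M=\BB(\HH)$, which is exactly Theorem C: a normal representation of $\BB(\HH)$ is an amplification of the identity, so $A\underset{\mathrm{nor}}{\otimes}\BB(\HH)=A\underset{\mathrm{min}}{\otimes}\BB(\HH)$ for every $C^*$-algebra $A$, whence Theorem C says precisely that Theorem D holds for $M=\BB(\HH)$, equivalently (Theorem B) that $C^*(F)$ has the LLP. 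Two harmless reductions are available first: every $x\in C^*(F)\odot M$ already lies in $C^*(F_0)\odot M$ for a finitely generated subgroup $F_0\le F$, and the conditional expectation $C^*(F)\to C^*(F_0)$ together with the inclusion leave both the nor- and the max-norm of $x$ unchanged, so one may assume $F$ finitely generated (hence $C^*(F)$ separable) and $x=\sum_i u_{g_i}\otimes m_i$ a finite sum. At this point there is a quick route: $C^*(F)$ has the LP (a theorem of Kirchberg; for finitely generated $F$ it also follows from the stability of the LP under full free products over $\C 1$, since $C^*(F)$ is the full free product of copies of $C(\mathbb T)$), so Theorem A finishes the proof. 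I will instead carry out a reduction that invokes only Theorem C.

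The key observation is that passing to the bidual turns arbitrary representations of $M$ into normal ones. If $(\sigma,\tau)$ is a commuting pair on a Hilbert space $\mathcal K$, with $\sigma$ a representation of $C^*(F)$ and $\tau$ a representation of the $C^*$-algebra $M$, let $\bar\tau:M^{**}\to\BB(\mathcal K)$ be the normal extension of $\tau$; then $\sigma(C^*(F))\subseteq\tau(M)'=\bar\tau(M^{**})'$, so $(\sigma,\bar\tau)$ is a \emph{normal} commuting pair for $(C^*(F),M^{**})$, and conversely every normal pair for $(C^*(F),M^{**})$ restricts along $M\hookrightarrow M^{**}$ to a commuting pair for $(C^*(F),M)$. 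Taking suprema over all such pairs yields $\|x\|_{\mathrm{max}}=\|x\|_{\mathrm{nor},\,M^{**}}$ for $x\in C^*(F)\odot M$ (identifying $x$ with its image in $C^*(F)\odot M^{**}$). Now write $M^{**}=M\oplus N$, where $N$ is the "singular summand", i.e.\ the corner by the central projection complementary to the one under which $M^{**}$ restricts normally onto $M$. As the nor-tensor product is additive over direct sums of von Neumann algebras, $\|x\|_{\mathrm{nor},\,M^{**}}=\max\{\,\|x\|_{\mathrm{nor},\,M},\ \|x_N\|_{\mathrm{nor},\,N}\,\}$, where $x_N$ is the image of $x$ under $\mathrm{id}_{C^*(F)}\otimes p_N$ and $p_N:M\to N$ is the canonical $*$-homomorphism. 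Hence Theorem D is reduced to the single estimate $\|x_N\|_{\mathrm{nor},\,N}\le\|x\|_{\mathrm{nor},\,M}$.

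This estimate is the heart of the matter, and the step I expect to be the main obstacle, because $p_N$ is \emph{not} normal, so $\mathrm{id}\otimes p_N$ has no a priori reason to be contractive for the normal tensor norms. When $M$ is nuclear it is easy: then $N$ is semidiscrete, $C^*(F)\underset{\mathrm{nor}}{\otimes}N=C^*(F)\underset{\mathrm{min}}{\otimes}N$, and $\|x_N\|_{\mathrm{nor},\,N}=\|x_N\|_{\mathrm{min},\,N}\le\|x\|_{\mathrm{min},\,M}\le\|x\|_{\mathrm{nor},\,M}$ by functoriality of the minimal norm. For a general $M$ — already for $M=\BB(\HH)$ — the summand $N$ fails to be semidiscrete, so one cannot collapse its normal tensor norm, and the resolution must use the lifting property of $C^*(F)$: one would lift $\tau$, or rather the finite-dimensional ucp maps it determines on an operator system containing $\{1,u_{g_i},u_{g_i}^*\}$, against the quotient realizing the passage from $M$ to $N$, and then feed the lifted data back into the already-settled case $M=\BB(\HH)$ of Theorem C. Arranging the lift so as to be compatible with the maximal tensor norm is exactly where the special nature of $C^*(F)$ is used, and is the point I expect to require the most care; in the shortcut via Theorem A the same content is packaged into the statement that $C^*(F)$ has the LP.
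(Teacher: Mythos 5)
Your reductions are sound as far as they go: restricting to a finitely generated subgroup, passing to $M^{**}$ to convert arbitrary representations of $M$ into normal ones, and splitting $M^{**}=M\oplus N$ along the central projection are all correct, and they correctly isolate the inequality $\|(\operatorname{id}\otimes p_N)(x)\|_{\operatorname{nor},N}\leqslant\|x\|_{\operatorname{nor},M}$ as what remains. But that inequality is not a reduction of the theorem --- it \emph{is} the theorem, merely rewritten: since every representation of $M$ is the restriction of a normal representation of $M^{**}$, one has $\|x\|_{\max,M}=\max\{\|x\|_{\operatorname{nor},M},\ \|x_N\|_{\operatorname{nor},N}\}$ identically, so the bidual decomposition gains nothing, and at precisely this point the proposal stops and declares that the step ``requires the most care.'' That step is the entire content of Theorem D, and the sketch offered for it (lift finite-dimensional ucp data against the quotient realizing $M^{**}\to N$ and feed it back into Theorem C) cannot work as stated for a general $C^*$-algebra in place of $C^*(F)$: it would show that $A\underset{\operatorname{min}}{\otimes}\BB(\HH)=A\underset{\operatorname{max}}{\otimes}\BB(\HH)$ implies $A\underset{\operatorname{nor}}{\otimes}M=A\underset{\operatorname{max}}{\otimes}M$ for every $M$, i.e.\ that LLP implies the conclusion of Theorem A, which is not known. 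So something specific to $C^*(F)$ must enter, and you do not say what.

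The ``quick route'' you mention (LP of $C^*(F)$ plus Theorem A) is also unavailable here: in this paper Theorem A is \emph{deduced from} Theorem D (Corollary 4(ii)), so invoking it is circular, and the stated purpose of the argument is to avoid Kirchberg's liftability results. The paper's actual proof, following Pisier and Haagerup, is: for $\sigma$-finite $M$ represented with a separating vector, Haagerup's Lemma 3.5 gives $\|T_{\{x_j\}}\|_{\operatorname{dec}}=\sup\big\{\big\|\sum_j a_jx_j\big\|:\ a_j\in M',\ \|a_j\|\leqslant1\big\}$ for $Te_j=x_j$; Bo\v zejko's theorem that $U_j\mapsto a_j$ extends to a ucp map $C^*(F)\to M'$ shows that $\big\|\sum_j U_j\otimes x_j\big\|_{\operatorname{nor}}\leqslant1$ forces $\|T\|_{\operatorname{dec}}\leqslant1$; the inequality $\|\rho T\|_{\operatorname{dec}}\leqslant\|T\|_{\operatorname{dec}}$ for an arbitrary representation $\rho$ of $M$ then bounds $\big\|\sum_j b_j\rho(x_j)\big\|$ for contractions $b_j\in\rho(M)'$, giving $\big\|\sum_j U_j\otimes x_j\big\|_{\operatorname{max}}\leqslant1$; finally Pisier's argument reduces a general element to one of the form $\sum_j U_j\otimes x_j$. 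None of these ingredients --- the decomposable norm, Bo\v zejko's extension theorem, the reduction to linear combinations of the free generators --- appears in your proposal, so the proof is missing its core.
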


Furthermore, he extended Theorem A proving

\begin{thmE}[\cite{Pis}, Theorem 0.2]
Let $(A_i)_{i\in I}$ be a family of $C^*$-algebras such that
\begin{equation*}
A_i \underset{\operatorname{min}}{\otimes} \BB(\HH) =A_i \underset{\operatorname{max}}{\otimes} \BB(\HH),\quad \forall i\in I .
\end{equation*}
Then $A\underset{\operatorname{min}}{\otimes} \BB(\HH)=A\underset{\operatorname{max}}{\otimes} \BB(\HH)$, where $A=\underset{i\in I}{\ast} A_i$
denotes the full free $C^*$-product with amalgamation over $\C 1$.
\end{thmE}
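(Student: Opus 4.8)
The plan is to reformulate the statement using Theorem B and then to reduce it to two-fold free products of full group $C^*$-algebras. By Theorem B the hypothesis says exactly that each $A_i$ has LLP, and the conclusion is equivalent to $A=\underset{i\in I}{\ast}A_i$ having LLP; so it is enough to prove that the full free product, amalgamated over $\C 1$, of a family of $C^*$-algebras with LLP again has LLP. The organizing observation I would use is the elementary remark: \emph{if $P$ has LLP, $q\colon P\to A$ is a surjective morphism and $\operatorname{id}_A$ lifts locally along $q$ --- that is, for every finite-dimensional operator system $X\subset A$ there is a ucp map $\theta_X\colon X\to P$ with $q\theta_X=\iota_X$, where $\iota_X$ is the inclusion --- then $A$ has LLP.} Indeed, given a ucp map $\Phi\colon A\to B/J$ and a finite-dimensional operator system $X\subset A$, pick such a $\theta_X$; as $P$ has LLP and $\theta_X(X)\subset P$ is a finite-dimensional operator system, $\Phi q\colon P\to B/J$ admits a ucp lift $g\colon\theta_X(X)\to B$, and then $g\theta_X\colon X\to B$ is a ucp lift of $\Phi|_X$. (Conversely, if $A$ has LLP then $\operatorname{id}_A$ lifts locally along every surjection onto $A$.) A routine density-and-perturbation argument shows that, to prove $A$ has LLP, it suffices to lift $\Phi|_X$ for $X$ spanned by $1$ and finitely many words $a_{i_1}\cdots a_{i_m}$ ($a_{i_k}\in A_{i_k}$, $i_k\neq i_{k+1}$); such a system involves only finitely many indices, hence lies in $A_{i_1}\ast\cdots\ast A_{i_n}$, and by associativity $A_1\ast\cdots\ast A_n=(A_1\ast\cdots\ast A_{n-1})\ast A_n$ and induction the matter reduces to two factors. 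So it is enough to prove: if $A_1,A_2$ have LLP, then $A_1\ast A_2$ has LLP.

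Assume now that $A_1,A_2$ have LLP and put $A=A_1\ast A_2$. Choose surjective morphisms $q_j\colon C^*(F_j)\to A_j$ from full group $C^*$-algebras of (possibly uncountable) free groups $F_j$. Then $C^*(F_1)\ast C^*(F_2)=C^*(F_1\ast F_2)=C^*(F)$ with $F=F_1\ast F_2$ free, and $q:=q_1\ast q_2\colon C^*(F)\to A$ is a surjective morphism. By Theorem C together with Theorem B, $C^*(F)$ has LLP. Hence, by the remark above, it suffices to show that $\operatorname{id}_A$ lifts locally along $q$.

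To do this I would argue as follows. Fix a finite-dimensional operator system $X\subset A_1\ast A_2$, which by the reduction above we may take to be spanned by $1$ and finitely many reduced alternating words. Fix states $\tau_j$ on $A_j$ and set $\tau_j':=\tau_j\circ q_j$, a state on $C^*(F_j)$. Choose finite-dimensional operator systems $E_j\subset A_j$ containing $1$ and an integer $\ell$ so that $X$ lies in the span of $1$ together with the reduced alternating words $x_1x_2\cdots x_m$ with $m\le\ell$ and each $x_k\in E_{i_k}\cap\ker\tau_{i_k}$. Since $A_j$ has LLP and $\operatorname{id}_{A_j}\colon A_j\to C^*(F_j)/\ker q_j$ is ucp, there is a ucp map $\theta_j\colon E_j\to C^*(F_j)$ with $q_j\theta_j=\iota_{E_j}$; note that $\tau_j'\theta_j=\tau_j q_j\theta_j=\tau_j|_{E_j}$, so $\theta_j$ carries $E_j\cap\ker\tau_j$ into $\ker\tau_j'$. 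Define $\widetilde\theta_X\colon X\to C^*(F)$, the free product of $\theta_1$ and $\theta_2$: it sends $1$ to $1$ and a reduced word $x_1\cdots x_m$ as above to $\theta_{i_1}(x_1)\theta_{i_2}(x_2)\cdots\theta_{i_m}(x_m)$, the product being formed in $C^*(F)=C^*(F_1)\ast C^*(F_2)$; this is well defined and $C^*(F)$-valued because the algebraic free product is the direct sum of its reduced-word summands and $\theta_j$ respects the state kernels. Since $q=q_1\ast q_2$ is a morphism,
\[q\bigl(\theta_{i_1}(x_1)\cdots\theta_{i_m}(x_m)\bigr)=q_{i_1}\theta_{i_1}(x_1)\cdots q_{i_m}\theta_{i_m}(x_m)=x_1\cdots x_m,\]
so $q\widetilde\theta_X=\iota_X$. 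Passing to the homomorphic quotient $q$ is essential here: the analogous word-by-word gluing of ucp maps lifting $\Phi$ itself would only reproduce $x_1\cdots x_m\mapsto\Phi(x_1)\cdots\Phi(x_m)$, which generally differs from $\Phi$, precisely because $q$ is multiplicative while $\Phi$ is not.

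It remains to verify that $\widetilde\theta_X$ is completely positive, and this is the step I expect to be the main obstacle. It is the assertion that the free product of the state-preserving ucp maps $\theta_1,\theta_2$ is again completely positive \emph{as a map into the full free product $C^*$-algebra} $C^*(F_1)\ast C^*(F_2)$ --- a full-free-product counterpart of the (more standard) fact that state-preserving ucp maps free-multiply to a ucp map on reduced free products, with the normalization $\tau_j'=\tau_j\circ q_j$ supplying the coherence. Establishing it is the technical heart of the proof --- the kind of statement the present paper's framework is meant to provide --- and one should expect a further wrinkle: since the $\theta_j$ are given only on the operator subsystems $E_j$, one needs a local form of the construction, producing a ucp map on the operator subsystem of $A_1\ast A_2$ spanned by the words of length $\le\ell$ rather than on all of $A_1\ast A_2$. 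Granting this, $\widetilde\theta_X$ is a ucp local lift of $\operatorname{id}_A$ along $q$, so $A_1\ast A_2$ has LLP by the remark, and then the induction together with Theorem B yields Theorem E.
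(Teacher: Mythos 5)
Your architecture is in fact the paper's: via Theorem B the problem becomes permanence of LLP under full free products, and the local lift of $\operatorname{id}_A$ along a surjection of a $C^*(F)$ onto $A$ is built word-by-word as a free product of local lifts of the $\operatorname{id}_{A_i}$ (this is Proposition 6 combined with Proposition 11 and Corollary 12 in the paper). But the step you explicitly ``grant'' --- that $\widetilde\theta_X$, the word-by-word free product of the ucp maps $\theta_j$, is completely positive with values in the full free product --- is not a wrinkle to be smoothed out later; it is the whole technical content of the argument, and leaving it unestablished is a genuine gap. In the paper this is exactly the imported theorem of \cite{Bo1} (see also \cite{Bo2}): given ucp maps $\Phi_i\colon A_i\to\BB(\KK)$ and fixed states $\varphi_i$ with kernels $A_i^0$, the unital map sending a reduced word $a_1\cdots a_n$ (letters $a_j\in A_{i_j}^0$, $i_1\neq\cdots\neq i_n$) to $\Phi_{i_1}(a_1)\cdots\Phi_{i_n}(a_n)$ extends to a ucp map on $\underset{i}{\ast}A_i$. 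Nothing in your outline indicates how this would be proved, and it does not follow formally from the lifting formalism you set up.

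The two difficulties you anticipate are, moreover, handled in the paper more simply than you expect: neither a ``local form'' of the free-product construction nor state-preservation of the $\theta_j$ is needed. The paper extends the local lifts $\Psi_i\colon Y_i\to B$ by Arveson's theorem to ucp maps $\widetilde\Psi_i\colon A_i\to\BB(\HH)$ (with $B\subset\BB(\HH)$ faithfully), applies the global theorem of \cite{Bo1} to obtain one ucp map $\Psi\colon\underset{i}{\ast}A_i\to\BB(\HH)$, and then simply observes that on the finite-dimensional operator system spanned by $1$ and short words with letters in the $Y_i^0$ the values of $\Psi$ lie in $B$ and $\pi\Psi=\operatorname{id}$. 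State-preservation is irrelevant because the map is defined through the direct-sum word decomposition of the algebraic free product of the domains; its values are just products in the target and need not sit in any particular summand, so your condition $\theta_j(E_j\cap\ker\tau_j)\subset\ker\tau_j'$ is unnecessary (as is the reduction to two factors by associativity --- the free-product theorem treats all factors at once). Finally, your ``routine density-and-perturbation argument'' is the Effros--Haagerup gluing (the paper's Lemma 9 and Corollary 10); it is indeed standard, but it requires the compatibility hypothesis that the lift on $X_n$ extend to a ucp map (not necessarily a lift) on $X_{n+1}$ with values in $B$ --- automatic in the paper because all partial lifts are restrictions of the single globally defined $\Psi$, but something you would still have to arrange in your set-up rather than leave implicit.
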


This note grew up in an attempt to understand the connection between these results and simplify the proofs of Kirchberg's
deep results. In the first part we give new proofs for Theorems A and B, relying on Theorems D respectively C and on standard arguments from \cite{EH}
(see also \cite{Was}).

The proof of Theorem E uses a factorization type result of Paulsen and Smith (\cite{PS}) and the Christensen-Effros-Sinclair  embedding of the Haagerup
tensor product of two $C^*$-algebras into their full $C^*$-amalgamated product (\cite{CES}). We give a different proof of Theorem E using the equivalence
between the conditions in Theorem B and the local liftability of $\operatorname{id}_A$ and the extension result for ucp maps on full free product
$C^*$-algebras from \cite{Bo1}.

As a consequence of our proof of Theorem A, if $(A_i)_{i\in I}$ is a family of $C^*$-algebras such that $\operatorname{id}_{A_i}$ is liftable
for all $i\in I$, then
\begin{equation*}
\Big( \underset{i\in I}{\ast} A_i \Big) \underset{\operatorname{nor}}{\otimes} M =
\Big( \underset{i\in I}{\ast} A_i \Big) \underset{\operatorname{max}}{\otimes} M
\end{equation*}
for any von Neumann algebra $M$.

The existence of a faithful block-diagonal $\ast$-representation of $C^*(\F_2)$ on a separable Hilbert space (\cite{Cho}) implies
that $C^*(F)$ is quasidiagonal for any free group $F$ (countable or not). We extend this result by proving that if $(A_i)_{i\in I}$ is a family of
(unital) quasidiagonal $C^*$-algebras, then $\underset{i\in I}{\ast} A_i$ is quasidiagonal. The proof relies
on the characterization of quasidiagonality with approximate multiplicative ucp maps (\cite{Voi}) and on \cite{Bo1}.

The following lemma is probably well-known:

\begin{lemma}
Let $J$ be an ideal of a $C^*$-algebra $B$, $\HH$ be a Hilbert space and $M$ a von Neumann algebra. Then:
\begin{itemize}
\item[(i)] If $B\underset{\operatorname{min}}{\otimes} \BB(\HH)=B \underset{\operatorname{max}}{\otimes} \BB(\HH)$, then
$J\underset{\operatorname{min}}{\otimes} \BB(\HH)=J\underset{\operatorname{max}}{\otimes} \BB(\HH)$.
\item[(ii)] If $B\underset{\operatorname{nor}}{\otimes} M=B\underset{\operatorname{max}}{\otimes} M$, then $J\underset{\operatorname{nor}}{\otimes} M =J\underset{\operatorname{max}}{\otimes} M$.
\end{itemize}
\end{lemma}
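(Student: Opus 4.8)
The plan is to treat the two parts uniformly, using the standard fact that for an ideal $J \subseteq B$ there is a quasi-central approximate unit $(u_\lambda)$, i.e. a net in $J$ with $0 \le u_\lambda \le 1$ such that $u_\lambda j \to j$ for all $j \in J$ and $\|u_\lambda b - b u_\lambda\| \to 0$ for all $b \in B$. This lets one realize elements of $J \otimes D$ (for $D = \BB(\HH)$ or $D = M$) as limits of $(u_\lambda \otimes 1) x (u_\lambda \otimes 1)$ with $x \in B \otimes D$. The key point is that the inclusion $J \hookrightarrow B$ is a completely positive map, hence induces contractions $J \otimes_\alpha D \to B \otimes_\alpha D$ for each of the relevant tensor norms $\alpha \in \{\min, \max, \mathrm{nor}\}$, and conversely the compression by $u_\lambda \otimes 1$ provides an approximate left inverse at the level of norms.

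The first step is to fix notation: let $J \odot D \subseteq B \odot D$ be the algebraic tensor product, and recall the general inequality $\|z\|_{\min} \le \|z\|_{\mathrm{nor}} \le \|z\|_{\max}$ on $J \odot D$ (with $\mathrm{nor}$ only relevant when $D = M$ is a von Neumann algebra). Since $J$ is an ideal, $J \otimes_{\min} D$ is the closure of $J \odot D$ inside $B \otimes_{\min} D$, so the $\min$-norm on $J \odot D$ is exactly the norm inherited from $B \otimes_{\min} D$. Thus it suffices to show that the $\max$-norm (resp. $\mathrm{nor}$-norm) on $J \odot D$ is dominated by the norm inherited from $B \otimes_{\min} D$ (resp. $B \otimes_{\mathrm{nor}} M$), since the hypothesis identifies the latter with $B \otimes_{\max} D$ (resp. $B \otimes_{\mathrm{nor}} M = B \otimes_{\max} M$).

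The second and main step is the compression argument. Given $z \in J \odot D$, write $z = \sum_{k=1}^n j_k \otimes d_k$. Using the quasi-central approximate unit, $(u_\lambda \otimes 1) z (u_\lambda \otimes 1) \to z$ in $B \otimes_{\min} D$ (hence in $J \odot D$ for any reasonable norm, once we know they agree in the limit — more precisely, the convergence is in the ambient $B \otimes_{\min} D$, but the elements $(u_\lambda \otimes 1)z(u_\lambda \otimes 1)$ lie in $J \odot D$ and are Cauchy there in $\max$/$\mathrm{nor}$ norm, which is what we need). For each fixed $\lambda$, the element $(u_\lambda \otimes 1) z (u_\lambda \otimes 1)$ equals $(\iota \otimes \mathrm{id}_D)\big((u_\lambda \otimes 1) z (u_\lambda \otimes 1)\big)$ where the middle $z$ is viewed in $B \odot D$; applying the contractive map $B \otimes_{\max} D \to$ itself given by compression by the contraction $u_\lambda \otimes 1$, together with the hypothesis $B \otimes_{\max} D = B \otimes_{\min} D$ (resp. $\mathrm{nor}$ version), we get $\|(u_\lambda \otimes 1) z (u_\lambda \otimes 1)\|_{\max} = \|(u_\lambda \otimes 1) z (u_\lambda \otimes 1)\|_{\min} \le \|z\|_{\min}$. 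Letting $\lambda \to \infty$ and using lower semicontinuity / the fact that the $\max$-norm is a genuine norm on $J \odot D$ under which this net converges to $z$, we conclude $\|z\|_{\max} \le \|z\|_{\min}$. The reverse inequality is automatic, so the two norms coincide on $J \odot D$, and completing gives $J \otimes_{\min} D = J \otimes_{\max} D$; the identical argument with $M$ and $\mathrm{nor}$ in place of $D$ and $\min$ gives part (ii).

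The step I expect to require the most care is the convergence claim: one must check that the net $(u_\lambda \otimes 1) z (u_\lambda \otimes 1)$ is actually Cauchy in the $\max$-norm (not merely convergent in $\min$), so that its $\max$-limit is $z$ and the norm inequality passes to the limit. This follows because $\|(u_\lambda \otimes 1) z (u_\mu \otimes 1) - (u_{\lambda'} \otimes 1) z (u_{\mu'} \otimes 1)\|_{\max}$ can be estimated, via the quasi-centrality and $u_\lambda j_k \to j_k$, by quantities that are uniformly controlled once one passes from $z$ to $(u_\nu \otimes 1) z (u_\nu \otimes 1)$ for large $\nu$ and uses that on the corner $\overline{u_\nu J u_\nu} \odot D$ the relevant estimates are governed by the already-established equality of norms on $B \odot D$. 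An alternative, cleaner route avoiding this is to invoke the known fact (e.g. as in \cite{Was}) that for any ideal $J$ and any $C^*$-algebra $D$ one has a canonical isometric inclusion $J \otimes_{\max} D \hookrightarrow B \otimes_{\max} D$ as the ideal generated by $J \odot D$ — this is because $\max$ tensoring with $D$ preserves short exact sequences — and similarly for $\min$ and, using \cite{Was}, for $\mathrm{nor}$; then the hypothesis on $B$ restricts directly to the ideal $J$, giving the conclusion in one line. I would present the argument in this second form, citing \cite{Was} for the exactness of $\cdot \otimes_{\max} D$ and $\cdot \otimes_{\mathrm{nor}} M$ along ideals, and remark that the first (quasi-central approximate unit) argument is the hands-on alternative.
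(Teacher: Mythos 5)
Your preferred (second) presentation is correct, but note that it quotes essentially the very point the paper takes the trouble to prove. The entire content of the lemma is that the intrinsic max- (resp.\ nor-) norm on $J\odot\BB(\HH)$ (resp.\ $J\odot M$) coincides with the norm inherited from $B\underset{\operatorname{max}}{\otimes}\BB(\HH)$ (resp.\ $B\underset{\operatorname{nor}}{\otimes}M$); the paper gets this in two lines from Dixmier's Proposition 2.10.4: every nondegenerate representation $\pi_1$ of the ideal $J$ extends to a representation $\widetilde{\pi}_1$ of $B$ on the same space with $\widetilde{\pi}_1(B)$ contained in the strong closure of $\pi_1(J)$, hence in the commutant of any $\pi_2$ commuting with $\pi_1$; therefore the suprema over commuting pairs defining $\|\cdot\|_{J\otimes_{\operatorname{max}}}$ and $\|\cdot\|_{J\otimes_{\operatorname{nor}}}$ coincide with the corresponding suprema for $B$, and together with the injectivity of $\otimes_{\operatorname{min}}$ the hypothesis restricts to $J$ exactly as you say. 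Citing \cite{Was} for exactness of $\cdot\otimes_{\operatorname{max}}D$ along ideals is legitimate in (i), but for (ii) you should verify that the normal-tensor-product statement (that $J\underset{\operatorname{nor}}{\otimes}M\hookrightarrow B\underset{\operatorname{nor}}{\otimes}M$ isometrically) is actually available where you cite it; it is true, and its proof is the same one-line extension argument, so it is cleaner to prove it than to outsource it.

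Your first (quasicentral approximate unit) route, offered as the hands-on alternative, has a genuine soft spot as written: compressing by $u_\lambda\otimes 1$ \emph{inside} $B\underset{\operatorname{max}}{\otimes}D$ only controls the norm that $J\odot D$ inherits from $B\underset{\operatorname{max}}{\otimes}D$, and so leaves the actual crux --- comparing that induced norm with the intrinsic $J\underset{\operatorname{max}}{\otimes}D$ norm --- untouched. To repair it you would need, for instance, the functoriality of $\otimes_{\operatorname{max}}$ (and of $\otimes_{\operatorname{nor}}$) under the completely positive maps $b\mapsto u_\lambda b u_\lambda$ from $B$ \emph{into} $J$, or else precisely the representation-extension fact above. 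By contrast, the Cauchy issue you flagged as delicate is vacuous: $z$ is a finite sum of elementary tensors and $u_\lambda j u_\lambda\to j$ in $J$, so $(u_\lambda\otimes1)z(u_\lambda\otimes1)\to z$ in any $C^*$-cross-norm on $J\odot D$.
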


\begin{proof}
(i) Let $x=\sum\limits_i a_i\otimes x_i \in J \odot \BB(\HH)$. Using the fact that any (nondegenerate) representation
$\rho$ of $J$ extends (uniquely) to a representation $\pi$ of $B$ on the same Hilbert space such that $\rho(J)$ is strongly dense
in $\pi(B)$ (\cite{Dix}, Proposition 2.10.4), the hypothesis and the isometric embedding of
$J\underset{\operatorname{min}}{\otimes} \BB(\HH)$ into $B\underset{\operatorname{min}}{\otimes} \BB(\HH)$ (\cite{Was}), we obtain:
\begin{equation*}
\begin{split}
\| x\|_{J\underset{\operatorname{max}}{\otimes} \BB(\HH)} & = \sup\Bigg\{ \bigg\| \sum_i \pi_1(a_i)\pi_2(x_i)\bigg\|;
\begin{aligned} & \pi_2\ \mbox{\rm representation of} \ \BB(\HH) \\  & \pi_1:J\rightarrow \pi_2(\BB(\HH))^\prime\ \mbox{\rm morphism}\end{aligned} \Bigg\}  \\
& =\sup \Bigg\{ \bigg\| \sum_i \widetilde{\pi}_1 (a_i) \pi_2(x_i) \bigg\| ;
\begin{aligned} & \pi_2 \ \mbox{\rm representation of}\ \BB(\HH) \\
& \widetilde{\pi}_1:B\rightarrow \pi_2 (\BB(\HH))^\prime\ \mbox{\rm morphism} \end{aligned} \Bigg\}  \\
& =\| x\|_{B\underset{\operatorname{max}}{\otimes} \BB(\HH)} =\| x\|_{B\underset{\operatorname{min}}{\otimes} \BB(\HH)}
=\| x\|_{J\underset{\operatorname{min}}{\otimes} \BB(\HH)} .
\end{split}
\end{equation*}

(ii) Let $x=\sum\limits_i a_i \otimes x_i \in J\odot M$. As in (i) we get
$\| x\|_{J\underset{\operatorname{max}}{\otimes} M} =\| x\|_{B\underset{\operatorname{max}}{\otimes} M}$. By hypothesis
$\| x\|_{B\underset{\operatorname{max}}{\otimes} M}=\| x\|_{B\underset{\operatorname{nor}}{\otimes} M}$ and therefore:
\begin{equation*}
\begin{split}
\| x\|_{J\underset{\operatorname{nor}}{\otimes} M} & = \sup\Bigg\{ \bigg\| \sum_i \pi_1(a_i) \pi_2(x_i) \bigg\| ;
\begin{aligned} & \pi_2 \ \mbox{\rm normal representation of}\ M \\
& \pi_1 :J \rightarrow \pi_2(M)^\prime\ \mbox{\rm morphism} \end{aligned}\Bigg\} \\
& =\sup\Bigg\{ \bigg\| \sum_i \widetilde{\pi}_1 (a_i) \pi_2(x_i)\bigg\| ;
\begin{aligned} & \pi_2 \ \mbox{\rm normal representation of} \ M \\
& \widetilde{\pi}_1 :B \rightarrow \pi_2(M)^\prime \  \mbox{\rm morphism} \end{aligned} \Bigg\} \\
& =\| x\|_{B\underset{\operatorname{nor}}{\otimes} M} .\qedhere
\end{split}
\end{equation*}
\end{proof}

\begin{remark} {\rm  (i) If $B$ is a $C^*$-algebra, $J$ an ideal of $B$ and $M$ a von Neumann algebra, then
$J\underset{\operatorname{nor}}{\otimes} M$ is an ideal of $B\underset{\operatorname{nor}}{\otimes} M$.

(ii)  If $A_1$ and $A_2$ are $C^*$-algebras and $\Phi_j:A_j \rightarrow \BB(\HH)$ are ucp maps such that
$[\Phi_1(a_1) , \Phi_2(a_2) ] = 0$, $a_j\in A_j$, there exist a Hilbert space $\KK$, an isometry $V:\HH\rightarrow \KK$ and
representations $\pi_j:A_j \rightarrow \BB(\KK)$ such that $\Phi_1(a_1)\Phi_2(a_2)=V^* \pi_1(a_1)\pi_2(a_2)V$,
$a_j\in A_j$, $j=1,2$ (see e.g. Proposition 4.23 in \cite{Tak} or Theorem 1.6 in \cite{Was}). This shows that
$\Phi \Big( \sum\limits_i a_i \otimes b_i \Big) =\sum\limits_i \Phi_1(a_i) \Phi_2(b_i)$ extends to a ucp map
$\Phi:A_1 \underset{\operatorname{max}}{\otimes} A_2 \rightarrow \BB(\HH)$.

An inspection of the proof of Theorem 1.6 in \cite{Was} shows actually that if one of the ucp maps $\Phi_j$ is normal,
the representation $\pi_j$ is also normal. In particular if $A$ is a $C^*$-algebra, $M$ a von Neumann algebra,
$\Phi_1:A\rightarrow \BB(\HH)$, $\Phi_2:M\rightarrow \BB(\HH)$ are ucp maps and $\Phi_2$ is normal, then $\Phi$ extends to a
ucp map $\Phi:A \underset{\operatorname{nor}}{\otimes} M \rightarrow \BB(\HH)$. }
\end{remark}

\begin{lemma}
Let $J$ be an ideal in a $C^*$-algebra $B$ and $A=B/J$. Then:

{\rm (i)} If $\operatorname{id}_A :A\rightarrow B/J$ is locally liftable, then the $C^*$-norm $\nu_{\operatorname{min}}$ induced
on $A\odot C$ from the quotient $\big( B\underset{\operatorname{min}}{\otimes} C\big) \big/ \big( J\underset{\operatorname{min}}{\otimes} C\big)$
coincides with $\underset{\operatorname{min}}{\otimes}$ for any $C^*$-algebra $C$.

{\rm (ii)} If $\operatorname{id}_A:A\rightarrow B/J$ is liftable, then the $C^*$-norm $\nu_{\operatorname{nor}}$ induced on
$A\odot M$ from $\big( B\underset{\operatorname{nor}}{\otimes} M\big) \big/ \big( J \underset{\operatorname{nor}}{\otimes} M \big)$ coincides
with $\underset{\operatorname{nor}}{\otimes}$ for any von Neumann algebra $M$.
\end{lemma}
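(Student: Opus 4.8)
The plan is to treat (i) and (ii) in parallel: in each case one of the two inequalities between the quotient $C^*$-norm on $A\odot C$ (resp.\ $A\odot M$) and the tensor norm is essentially automatic, and the content is the reverse one, which I would obtain by tensoring the relevant lift of $\operatorname{id}_A$ with the identity of $C$ (resp.\ $M$), composing with the quotient map, and checking that the result restricts to the canonical embedding of $A\odot C$ (resp.\ $A\odot M$) into the quotient. For (i), $J\underset{\operatorname{min}}{\otimes}C$ is an ideal of $B\underset{\operatorname{min}}{\otimes}C$, so $Q:=\big(B\underset{\operatorname{min}}{\otimes}C\big)\big/\big(J\underset{\operatorname{min}}{\otimes}C\big)$ is a $C^*$-algebra whose canonical dense $\ast$-subalgebra is the image $A\odot C$ of $B\odot C$; since $\underset{\operatorname{min}}{\otimes}$ is the smallest $C^*$-norm on $A\odot C$, the inequality $\nu_{\operatorname{min}}\ge\|\cdot\|_{\operatorname{min}}$ is immediate. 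For the reverse, I would fix $x=\sum_{k=1}^{n}a_k\otimes c_k\in A\odot C$ and put $X=\operatorname{span}\{1,a_1,a_1^*,\dots,a_n,a_n^*\}\subset A$, a finite dimensional operator system; local liftability of $\operatorname{id}_A$ then provides a ucp map $\widetilde\Phi_X:X\to B$ with $\pi\widetilde\Phi_X=\operatorname{id}_X$. Since $\widetilde\Phi_X\otimes\operatorname{id}_C$ extends to a ucp map $X\underset{\operatorname{min}}{\otimes}C\to B\underset{\operatorname{min}}{\otimes}C$ (a standard property of the minimal tensor product), composing with the quotient map $q:B\underset{\operatorname{min}}{\otimes}C\to Q$ yields a ucp map $\Psi:X\underset{\operatorname{min}}{\otimes}C\to Q$ with $\Psi(a\otimes c)=\pi(\widetilde\Phi_X(a))\otimes c=a\otimes c$ for $a\in X$, $c\in C$. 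Hence $\|x\|_{\nu_{\operatorname{min}}}=\|\Psi(x)\|\le\|x\|_{X\underset{\operatorname{min}}{\otimes}C}=\|x\|_{A\underset{\operatorname{min}}{\otimes}C}$, the last equality because the minimal tensor product is injective and $X$ is completely order embedded in $A$. Thus $\nu_{\operatorname{min}}=\|\cdot\|_{\operatorname{min}}$.

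\textbf{Part (ii).} By part (i) of the Remark above, $J\underset{\operatorname{nor}}{\otimes}M$ is an ideal of $B\underset{\operatorname{nor}}{\otimes}M$, so $Q:=\big(B\underset{\operatorname{nor}}{\otimes}M\big)\big/\big(J\underset{\operatorname{nor}}{\otimes}M\big)$ makes sense, with dense subalgebra $A\odot M$. Here $\nu_{\operatorname{nor}}\ge\|\cdot\|_{\operatorname{nor}}$ is not automatic, but it follows by noting that any representation $a\otimes m\mapsto\sigma_1(a)\sigma_2(m)$ of $A\odot M$, with $\sigma_2$ normal and $[\sigma_1(A),\sigma_2(M)]=0$, pulls back along $\pi$ to one of $B\odot M$ of the same type; hence $\pi\odot\operatorname{id}_M$ extends to a surjective $\ast$-morphism $B\underset{\operatorname{nor}}{\otimes}M\to A\underset{\operatorname{nor}}{\otimes}M$ annihilating $J\underset{\operatorname{nor}}{\otimes}M$, which then factors through a $\ast$-morphism $Q\to A\underset{\operatorname{nor}}{\otimes}M$ equal to the identity on $A\odot M$, so $\|\cdot\|_{\operatorname{nor}}\le\nu_{\operatorname{nor}}$. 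For the reverse inequality, I would take a ucp lift $\widetilde\Phi:A\to B$ of $\operatorname{id}_A$ and represent $B\underset{\operatorname{nor}}{\otimes}M$ isometrically on a Hilbert space $\HH$ in the form $b\otimes m\mapsto\Sigma(b)T(m)$, where $\Sigma:B\to\BB(\HH)$ is a representation, $T:M\to\BB(\HH)$ a normal representation, and $[\Sigma(B),T(M)]=0$ --- possible by the definition of $\underset{\operatorname{nor}}{\otimes}$, using a direct sum of the pairs of representations that define the norm. Then $\Phi_1:=\Sigma\widetilde\Phi:A\to\BB(\HH)$ is ucp with range in $\Sigma(B)$, $\Phi_2:=T$ is normal, and $[\Phi_1(A),\Phi_2(M)]=0$, so part (ii) of the Remark applies: $\sum a_i\otimes m_i\mapsto\sum\Sigma(\widetilde\Phi(a_i))T(m_i)$ extends to a ucp map $A\underset{\operatorname{nor}}{\otimes}M\to\BB(\HH)$. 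Its values on $A\odot M$ lie in the image of $B\underset{\operatorname{nor}}{\otimes}M$, which is closed, so we obtain from it a ucp map $\widetilde\Phi\otimes\operatorname{id}_M:A\underset{\operatorname{nor}}{\otimes}M\to B\underset{\operatorname{nor}}{\otimes}M$ extending $\widetilde\Phi\odot\operatorname{id}_M$. Composing with $q:B\underset{\operatorname{nor}}{\otimes}M\to Q$ gives a ucp map that restricts to the embedding $A\odot M\hookrightarrow Q$, whence $\nu_{\operatorname{nor}}\le\|\cdot\|_{\operatorname{nor}}$, and equality follows.

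\textbf{Main obstacle.} The one step that is not purely formal is the construction, in (ii), of the ucp map $\widetilde\Phi\otimes\operatorname{id}_M:A\underset{\operatorname{nor}}{\otimes}M\to B\underset{\operatorname{nor}}{\otimes}M$: there is no a priori functoriality of $\underset{\operatorname{nor}}{\otimes}$ with respect to ucp maps, and the device of realizing $B\underset{\operatorname{nor}}{\otimes}M$ concretely through a commuting pair $(\Sigma,T)$ with $T$ normal --- so that part (ii) of the Remark applies to $\Phi_1=\Sigma\widetilde\Phi$ and $\Phi_2=T$ --- together with the verification that the resulting ucp map into $\BB(\HH)$ actually takes values in $B\underset{\operatorname{nor}}{\otimes}M$, is the crux of the argument. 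Everything else is bookkeeping with compositions of ucp maps, contractivity of $\ast$-morphisms, and injectivity of the minimal tensor product on operator systems.
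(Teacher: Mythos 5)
Your proof is correct and follows essentially the same route as the paper: in each part the easy inequality comes from factoring $\pi\otimes\operatorname{id}$ through the quotient, and the substantive one comes from tensoring a (local) ucp lift of $\operatorname{id}_A$ with the identity, using injectivity of $\underset{\operatorname{min}}{\otimes}$ in (i) and Remark 2(ii) in (ii). The only difference is that you spell out in detail the construction of $\widetilde\Phi\otimes\operatorname{id}_M$ on $A\underset{\operatorname{nor}}{\otimes}M$ via a faithful commuting-pair representation with normal second leg, which the paper delegates to Remark 2 in one line.
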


\begin{proof}
(i) The proof is as in Theorem 3.2, ${\rm (i)}\Rightarrow {\rm (ii)}$, \cite{EH}. Denote by $\pi$ the quotient morphism
from $B$ onto $A$. Let $x=\sum\limits_i \pi(b_i)\otimes c_i$, with $b_i\in B$, $c_i\in C$ and $X\subset A$ be a finite dimensional operator
system which contains $\pi(b_i)$. By hypothesis, there exists a ucp map $\Phi:X \rightarrow B$ such that $\pi\Phi=\operatorname{id}_X$. We have:
\begin{equation*}
\| x\|_{\nu_{\operatorname{min}}} =\underset{a\in J\underset{\operatorname{min}}{\otimes} C}{\inf} \big\| \big( \Phi\otimes \operatorname{id}_C \big) (x) +a\big\|_{B\underset{\operatorname{min}}{\otimes} C}
\leqslant \big\| \big(\Phi\otimes \operatorname{id}_C \big) (x) \big\|_{B\underset{\operatorname{min}}{\otimes} C} \leqslant
\| x\|_{A\underset{\operatorname{min}}{\otimes} C} .
\end{equation*}

The inequality $\| x\|_{A\underset{\operatorname{min}}{\otimes} C} \leqslant \| x\|_{\nu_{\operatorname{min}}}$ is clear.

(ii) The inequality $\| x\|_{\nu_{\operatorname{nor}}} \leqslant \| x\|_{A\underset{\operatorname{nor}}{\otimes} M}$, $x\in A \odot M$ follows as above, considering a ucp lifting
$\Phi:A\rightarrow B$ of $\operatorname{id}_A$ and using the fact that $\Phi\otimes \operatorname{id}_M$ extends by Remark 2 to a contractive map
$\Phi\otimes \operatorname{id}_M :A\underset{\operatorname{nor}}{\otimes} M \rightarrow B \underset{\operatorname{nor}}{\otimes} M$.

Since $\pi\otimes \operatorname{id}_M : B\underset{\operatorname{nor}}{\otimes} M \rightarrow A \underset{\operatorname{nor}}{\otimes} M$ is contractive and its kernel
contains $J\underset{\operatorname{nor}}{\otimes} M$, we get for all $x=\sum\limits_i b_i \otimes c_i \in A \odot M$:
\begin{equation*}
\begin{split}
\| x\|_{A\underset{\operatorname{nor}}{\otimes} M} & = \underset{a\in J \underset{\operatorname{nor}}{\otimes} M}{\inf}
\bigg\| \big( \pi\otimes \operatorname{id}_M \big) \bigg( \sum_i \Phi(b_i) \otimes c_i +a\bigg) \bigg\|_{A\underset{\operatorname{nor}}{\otimes} M}  \\
& \leqslant \underset{a\in J\underset{\operatorname{nor}}{\otimes} M}{\inf}
\bigg\| \sum_i \Phi(b_i)\otimes c_i +a\bigg\|_{B\underset{\operatorname{nor}}{\otimes} M} =\| x\|_{\nu_{\operatorname{nor}}} .\qedhere
\end{split}
\end{equation*}
\end{proof}

\begin{cor}
Let $A$ be a $C^*$-algebra. Then:
\begin{itemize}
\item[(i)]
If $\operatorname{id}_A$ is locally liftable, then $A\underset{\operatorname{min}}{\otimes} \BB(\HH) =A \underset{\operatorname{max}}{\otimes} \BB(\HH)$.
\item[(ii)]
If $\operatorname{id}_A$ is liftable, then $A\underset{\operatorname{nor}}{\otimes} M=A\underset{\operatorname{max}}{\otimes} M$ for any von Neumann algebra $M$.
\end{itemize}
\end{cor}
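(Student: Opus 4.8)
The plan is to derive both statements from Lemmas~1 and~3 together with Theorems~C and~D, using the standard device of presenting $A$ as a quotient of a full free-group $C^*$-algebra. Every unital $C^*$-algebra is a quotient of $C^*(F)$ for a suitable (not necessarily countable) free group $F$ --- send the free generators onto a family of unitaries whose linear span is dense in $A$ --- so fix such an $F$, put $B=C^*(F)$, and let $J$ be an ideal of $B$ with $A=B/J$ and quotient map $\pi\colon B\to A$.

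\emph{Proof of (i).} Theorem~C gives $B\underset{\operatorname{min}}{\otimes}\BB(\HH)=B\underset{\operatorname{max}}{\otimes}\BB(\HH)$, and Lemma~1(i) then gives $J\underset{\operatorname{min}}{\otimes}\BB(\HH)=J\underset{\operatorname{max}}{\otimes}\BB(\HH)$. Inside the common ambient algebra $B\underset{\operatorname{min}}{\otimes}\BB(\HH)=B\underset{\operatorname{max}}{\otimes}\BB(\HH)$ both of these sit isometrically as ideals --- for the min version by the inclusion already used in the proof of Lemma~1, for the max version as part of the exactness of $\underset{\operatorname{max}}{\otimes}$ --- and they carry equal norms, so they are one and the same ideal. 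Hence the quotient of $B\underset{\operatorname{min}}{\otimes}\BB(\HH)$ by $J\underset{\operatorname{min}}{\otimes}\BB(\HH)$ and the quotient of $B\underset{\operatorname{max}}{\otimes}\BB(\HH)$ by $J\underset{\operatorname{max}}{\otimes}\BB(\HH)$ are literally the same $C^*$-algebra. By exactness of the maximal tensor product (see \cite{Was}) that algebra, viewed from the max side, is $A\underset{\operatorname{max}}{\otimes}\BB(\HH)$; and since $\operatorname{id}_A\colon A\to B/J$ is locally liftable, Lemma~3(i) with $C=\BB(\HH)$ identifies it, viewed from the min side, with $A\underset{\operatorname{min}}{\otimes}\BB(\HH)$. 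Therefore $A\underset{\operatorname{min}}{\otimes}\BB(\HH)=A\underset{\operatorname{max}}{\otimes}\BB(\HH)$.

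\emph{Proof of (ii).} The same argument applies verbatim with the von Neumann algebra $M$ in place of $\BB(\HH)$, Theorem~D in place of Theorem~C, and Lemma~1(ii) and Lemma~3(ii) in place of their min-counterparts; here Remark~2(i) is what guarantees that $J\underset{\operatorname{nor}}{\otimes}M$ is an ideal of $B\underset{\operatorname{nor}}{\otimes}M$, so that the nor-quotient is defined, and the liftability of $\operatorname{id}_A$ is exactly the hypothesis that Lemma~3(ii) needs.

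The argument involves no computation; the one delicate point is to identify $J\underset{\operatorname{min}}{\otimes}\BB(\HH)$, $J\underset{\operatorname{max}}{\otimes}\BB(\HH)$ (resp.\ their nor/max analogues) and $\ker(\pi\otimes\operatorname{id})$ as a single ideal in one ambient $C^*$-algebra, so that forming the quotient on either side yields the same algebra --- this is precisely what Lemma~1, Lemma~3 and the exactness of $\underset{\operatorname{max}}{\otimes}$ provide. A minor point: ``$\operatorname{id}_A$ (locally) liftable'' is to be applied to the presentation $A=C^*(F)/J$ chosen above, which is harmless since this property of $A$ does not depend on the chosen surjection onto $A$.
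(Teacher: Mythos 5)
Your proposal is correct and follows essentially the same route as the paper: present $A=C^*(F)/J$, use Theorem C (resp.\ Theorem D) together with Lemma 1 to identify $J\underset{\operatorname{min}}{\otimes}\BB(\HH)$ with $J\underset{\operatorname{max}}{\otimes}\BB(\HH)$ (resp.\ the nor/max ideals) inside the common ambient algebra, invoke exactness of $\underset{\operatorname{max}}{\otimes}$ to read the quotient as $A\underset{\operatorname{max}}{\otimes}\BB(\HH)$, and use Lemma 3 with the (local) liftability of $\operatorname{id}_A$ to read it as $A\underset{\operatorname{min}}{\otimes}\BB(\HH)$ (resp.\ $A\underset{\operatorname{nor}}{\otimes}M$). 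The only cosmetic differences are that the paper reproves equality \eqref{eq2} (Theorem D) in situ rather than citing it, and that it simply applies the hypothesis to the chosen free presentation (cf.\ Remark 5(ii)) rather than asserting, as you do without proof, that the property is independent of the chosen surjection.
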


\begin{proof}
(i) Let $F$ be a freee group (not necessarily countable) and $\pi:C^*(F)\rightarrow A$ a morphism which is onto and $J=\operatorname{Ker}\pi$.
The following sequence is exact (\cite{Was}):
\begin{equation}\label{eq1}
0\rightarrow J\underset{\operatorname{max}}{\otimes} \BB(\HH) \rightarrow C^*(F) \underset{\operatorname{max}}{\otimes} \BB(\HH) \rightarrow
A\underset{\operatorname{max}}{\otimes} \BB(\HH) \rightarrow 0.
\end{equation}

If $\operatorname{id}_A$ is liftable then $A\underset{\operatorname{min}}{\otimes} \BB(\HH) =A\underset{\nu_{\operatorname{min}}}{\otimes} \BB(\HH)$
by Lemma 3. Since $C^*(F) \underset{\operatorname{max}}{\otimes} \BB(\HH) = C^*(F) \underset{\operatorname{min}}{\otimes} \BB(\HH)$
(\cite{Kir2}, see Theorem 0.1, \cite{Pis} for a short proof), Lemma 1 shows that
$J\underset{\operatorname{max}}{\otimes} \BB(\HH) = J \underset{\operatorname{min}}{\otimes} \BB(\HH)$. By the exactness of \eqref{eq1} we have
the following canonical equalities:
\begin{equation*}
\begin{split}
A\underset{\operatorname{max}}{\otimes} \BB(\HH) & =\big( C^*(F)\underset{\operatorname{max}}{\otimes} \BB(\HH)\big) \big/ \big(
J\underset{\operatorname{max}}{\otimes} \BB(\HH)\big) =\big( C^*(F) \underset{\operatorname{min}}{\otimes} \BB(\HH)\big) \big/
\big( J\underset{\operatorname{min}}{\otimes} \BB(\HH)\big) \\
& =A\underset{\nu_{\operatorname{min}}}{\otimes} \BB(\HH) = A\underset{\operatorname{min}}{\otimes} \BB(\HH) .
\end{split}
\end{equation*}

(ii) For any free group $F$ and von Neumann algebra $M$ we have
\begin{equation}\label{eq2}
C^*(F) \underset{\operatorname{max}}{\otimes} M=C^*(F) \underset{\operatorname{nor}}{\otimes} M .
\end{equation}

The subsequent proof of \eqref{eq2} is from \cite{Pis} and the idea is to consider as in \cite{Haa}
for any $x_1,\ldots, x_n \in M$ the linear (completely bounded) map $T=T_{x_1,\ldots,x_n}:\C^n \rightarrow M$,
$Te_j=x_j$. If $\sigma$-finite, $M$ can be represented on a Hilbert space with separating vector and according to
Lemma 3.5, \cite{Haa} we have
\begin{equation}\label{eq3}
\| T\|_{\operatorname{dec}} =\sup\limits_{\substack{a_j \in M^\prime \\ \| a_j\| \leqslant 1}} \bigg\| \sum_{j=1}^n a_j x_j \bigg\| .
\end{equation}

Denote by $\{ U_j\}_j$ the canonical generators of $C^*(F)$. Let $x_j\in M$ such that $x_j \neq 0$ only for finitely many $j'$s and
$\big\| \sum\limits_j U_j \otimes x_j \big\|_{C^*(F)\underset{\operatorname{nor}}{\otimes} M} \leqslant 1$. Let $a_j\in M^\prime$ such that
$\| a_j\| \leqslant 1$. By \cite{Boz} the map $\Phi(U_j)=a_j$ extends to a ucp map $\Phi:C^*(F)\rightarrow M^\prime$ and according to \eqref{eq3} and Remark 2, we obtain
\begin{equation*}
\begin{split}
\big\| T_{\{ x_j\}_j} \big\|_{\operatorname{dec}} & =
\sup\limits_{\substack{a_j\in M^\prime \\ \| a_j\|\leqslant 1}} \bigg\| \sum_j a_j x_j \bigg\| \leqslant \sup
\Bigg\{ \bigg\| \sum_j \Phi(U_j)x_j \bigg\| ; \ \Phi:C^*(F)\rightarrow M^\prime \ \mbox{\rm ucp} \Bigg\}
\\
& \leqslant \bigg\| \sum_j U_j \otimes x_j \bigg\|_{C^*(F) \underset{\operatorname{nor}}{\otimes} M} \leqslant 1 .
\end{split}
\end{equation*}

Let $\rho:M\rightarrow \BB(\HH)$ be a representation. By \eqref{eq3} and Proposition 1.3, \cite{Haa} we have
\begin{equation*}
\sum\limits_{\substack{b_j\in \rho(M)^\prime \\ \| b_j\| \leqslant 1}} \bigg\| \sum_j b_j \rho(x_j)\bigg\| =
\big\| \rho T_{\{ x_j\}_j} \big\|_{\operatorname{dec}} \leqslant \big\| T_{\{ x_j\}_j} \big\|_{\operatorname{dec}} \leqslant 1,
\end{equation*}
and therefore $\big\| \sum\limits_j U_j \otimes x_j \big\|_{C^*(F) \underset{\operatorname{max}}{\otimes} M} \leqslant 1$. The arguments from
\cite{Pis} show that this implies \eqref{eq2}.

If $M$ is not $\sigma$-finite, one proves
\begin{equation*}
\bigg\| \sum_j U_j \otimes x_j \bigg\|_{C^*(F) \underset{\operatorname{max}}{\otimes} M} =
\bigg\| \sum_j U_j \otimes x_j \bigg\|_{C^*(F) \underset{\operatorname{nor}}{\otimes} M}
\end{equation*}
using a net $(p_\iota)_{\iota}$ of $\sigma$-finite projections such that $p_\iota \nearrow 1$ strongly.

Finally, let $A=C^*(F)/J$ be a $C^*$-algebra such that $\operatorname{id}_A :A\rightarrow B/J$
is liftable. The equality $A\underset{\operatorname{nor}}{\otimes} M =A\underset{\operatorname{max}}{\otimes} M$ follows as in (i)
using \eqref{eq2} and part (ii) in Lemma 3.
\end{proof}

\begin{remark}
{\rm (i) A different proof of \eqref{eq2} was obtained by Kirchberg and Wassermann (\cite{KW}).

(ii) By the previous proof we may assume only that $\operatorname{id}_A :A \rightarrow C^*(F)/J$ is
locally liftable (respectively liftable), where $F$ is a free group such that $A=C^*(F)/J$.}
\end{remark}

The next proof of ${\rm (iii)} \Rightarrow {\rm (i)}$ in Theorem B follows the ideas of
${\rm (vi)} \Rightarrow {\rm (i)}$ in Proposition 2.2, \cite{Kir1}. Nevertheless, it uses only the fact that
$C^*(F) \underset{\operatorname{min}}{\otimes} \BB(\HH) =C^*(F)\underset{\operatorname{max}}{\otimes} \BB(\HH)$ and does not appeal to the (local)
liftability of $C^*(F)$ (\cite{Kir2}, Lemma 3.3).

\begin{proposition}
Let $A$ be a $C^*$-algebra. The following are equivalent:

{\rm (i)} $\  A$ has {\rm LLP}.

{\rm (ii)} There exist a free group $F$ and a morphism $\pi$ from $B=C^*(F)$ onto $A$ such that
$\operatorname{id}_A:A \rightarrow B/\operatorname{Ker}\pi$ is locally liftable.

{\rm (iii)} $A\underset{\operatorname{min}}{\otimes} \BB(\HH) =A\underset{\operatorname{max}}{\otimes} \BB(\HH)$.
\end{proposition}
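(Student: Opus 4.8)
The plan is to establish the cycle $(i)\Rightarrow(ii)\Rightarrow(iii)\Rightarrow(i)$. The implication $(i)\Rightarrow(ii)$ is immediate from the definition of {\rm LLP}: choose a free group $F$ and a surjective morphism $\pi:C^*(F)\to A$ (possible since the unitaries of $A$ generate $A$ as a $C^*$-algebra) and apply {\rm LLP} of $A$ to the ucp map $\operatorname{id}_A:A\to C^*(F)/\operatorname{Ker}\pi\cong A$. The implication $(ii)\Rightarrow(iii)$ is Corollary 4(i) together with Remark 5(ii): local liftability of $\operatorname{id}_A$ through one free presentation already gives $A\underset{\operatorname{min}}{\otimes}\BB(\HH)=A\underset{\operatorname{max}}{\otimes}\BB(\HH)$.

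The content is $(iii)\Rightarrow(i)$. First I would reduce it to a statement about a free presentation of $A$. Let $B$ be a $C^*$-algebra, $J$ an ideal, $q:B\to B/J$ the quotient morphism, $\Phi:A\to B/J$ a ucp map and $X\subseteq A$ a finite-dimensional operator system; the aim is a ucp map $\psi:X\to B$ with $q\psi=\Phi|_X$. Fix a free group $F$, a surjective morphism $p:C^*(F)\to A$, and put $N=\operatorname{Ker}p$. The composition $\Phi p:C^*(F)\to B/J$ sends the canonical generators $U_s$ of $C^*(F)$ to unitaries $w_s\in B/J$; lift each $w_s$ to a contraction $\widetilde w_s\in B$ and let $\Psi:C^*(F)\to B$ be the ucp map with $\Psi(U_s)=\widetilde w_s$ provided by Bo\.zejko's theorem \cite{Boz}. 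Since $q\Psi$ and $\Phi p$ are continuous and agree on the $U_s$, which span a dense $\ast$-subalgebra of $C^*(F)$, we get $q\Psi=\Phi p$; thus $\Phi p$ admits a global ucp lift through $q$. Consequently, if we can produce a ucp map $\psi_0:X\to C^*(F)$ with $p\psi_0$ equal to the inclusion $\iota_X:X\hookrightarrow A$, then $\psi:=\Psi\psi_0$ is the sought lift. So the whole matter reduces to showing that $\operatorname{id}_A:A\to C^*(F)/N$ is locally liftable; note that this reduction appeals only to \cite{Boz}, not to the liftability of $C^*(F)$ from \cite{Kir2}.

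To obtain the local liftability of $\operatorname{id}_A:A\to C^*(F)/N$ from (iii), I would run a tensor-norm computation in the spirit of $(vi)\Rightarrow(i)$ of Proposition 2.2 in \cite{Kir1}. By Theorem C, $C^*(F)\underset{\operatorname{min}}{\otimes}\BB(\HH)=C^*(F)\underset{\operatorname{max}}{\otimes}\BB(\HH)$, so $N\underset{\operatorname{min}}{\otimes}\BB(\HH)=N\underset{\operatorname{max}}{\otimes}\BB(\HH)$ by Lemma 1(i). The sequence $0\to N\underset{\operatorname{max}}{\otimes}\BB(\HH)\to C^*(F)\underset{\operatorname{max}}{\otimes}\BB(\HH)\to A\underset{\operatorname{max}}{\otimes}\BB(\HH)\to0$ is exact (the analogue of \eqref{eq1}), while $N\underset{\operatorname{min}}{\otimes}\BB(\HH)$ is an ideal of $C^*(F)\underset{\operatorname{min}}{\otimes}\BB(\HH)$; replacing the first two terms by their equal $\operatorname{min}$-versions therefore identifies $A\underset{\operatorname{max}}{\otimes}\BB(\HH)$ with $\big(C^*(F)\underset{\operatorname{min}}{\otimes}\BB(\HH)\big)\big/\big(N\underset{\operatorname{min}}{\otimes}\BB(\HH)\big)=A\underset{\nu_{\operatorname{min}}}{\otimes}\BB(\HH)$, in the notation of Lemma 3(i) (with $B=C^*(F)$ and $C=\BB(\HH)$). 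Hence $\nu_{\operatorname{min}}$ agrees with $\underset{\operatorname{max}}{\otimes}$ on $A\odot\BB(\HH)$, and (iii) then forces $\nu_{\operatorname{min}}=\underset{\operatorname{min}}{\otimes}$ on $A\odot\BB(\HH)$.

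It remains to turn the equality $\nu_{\operatorname{min}}=\underset{\operatorname{min}}{\otimes}$ on $A\odot\BB(\HH)$ into the local liftability of $\operatorname{id}_A:A\to C^*(F)/N$ — this is the converse of Lemma 3(i) and is where the real difficulty lies. I would argue as in the proof of Theorem 3.2 of \cite{EH}: for a finite-dimensional operator system $X\subseteq A$, fix a unital complete order embedding $X\subseteq\mathbb M_n\subseteq\BB(\HH)$, and use that a ucp map out of $X$ into a $C^*$-algebra corresponds, via its behaviour on the matrix levels of $X$ (all of which can be encoded simultaneously over $\BB(\HH)$ because $X$ is finite-dimensional), to a positive element over that algebra satisfying the finitely many linear constraints determined by a basis of $X$; the inclusion $\iota_X$ solves the corresponding problem over $A$, and the equality $\nu_{\operatorname{min}}=\underset{\operatorname{min}}{\otimes}$ on $A\odot\BB(\HH)$ is exactly what guarantees that this solution lifts to one over $C^*(F)$ obeying the same constraints, i.e. to a ucp map $\psi_0:X\to C^*(F)$ with $p\psi_0=\iota_X$. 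Feeding $\psi_0$ into the reduction of the previous paragraph completes the proof. The genuinely delicate point is this last conversion; the two earlier implications and the tensor-norm bookkeeping are routine once Theorem C and Lemmas 1 and 3 are in hand.
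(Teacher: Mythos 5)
Your implications (i)$\Rightarrow$(ii) and (ii)$\Rightarrow$(iii) match the paper, but the reduction you use for (iii)$\Rightarrow$(i) contains a genuine error. You claim $q\Psi=\Phi p$ because both maps are continuous and agree on the generators $U_s$. Unital completely positive maps are not multiplicative, and the linear span of the $U_s$ is not dense in $C^*(F)$ (only the $\ast$-algebra they generate is), so agreement on the generators says nothing about agreement on words $U_{s_1}U_{s_2}\cdots$. What Bo\.zejko's theorem provides is a ucp map whose restriction to the operator system spanned by $1$, the $U_s$ and their adjoints lifts the corresponding restriction of $\Phi p$ --- not a lift of $\Phi p$ itself; in particular you have no control of $q\Psi$ on the range of $\psi_0$, so $q\Psi\psi_0=\Phi|_X$ does not follow. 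Indeed, if your three-line argument were valid it would prove that every ucp map defined on $C^*(F)$ lifts through every quotient, i.e.\ that $C^*(F)$ has the LP --- but that is exactly Kirchberg's deep Lemma 3.3 of \cite{Kir2}, which this proposition is explicitly designed to avoid, and it contradicts your own claim that the reduction uses only \cite{Boz}. (A minor point: $\Phi p(U_s)$ is merely a contraction, not a unitary, though that by itself would not hurt.) So either you invoke the LP of $C^*(F)$, defeating the purpose, or the reduction to local liftability of $\operatorname{id}_A$ collapses.

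The second problem is that the step you correctly identify as ``where the real difficulty lies'' --- converting $\nu_{\operatorname{min}}=\operatorname{min}$ on $A\odot\BB(\HH)$ into local liftability of $\operatorname{id}_A$, i.e.\ the converse of Lemma 3(i) --- is left as a sketch, and the sketch begins with ``fix a unital complete order embedding $X\subseteq\M_n$'', which is not available for a general finite-dimensional operator system. The paper proceeds quite differently and avoids both difficulties: it reduces on the codomain side (replace the target $B$ by some $C^*(F)$ surjecting onto it, so that Theorem C applies to $B$ itself), and then, for an arbitrary ucp map $\Phi:A\to B/J$ and finite-dimensional $E\subset A$, takes a self-adjoint lift $\Psi$, compresses it by a quasicentral approximate unit, $\Psi_\iota=(1-e_\iota)^{1/2}\Psi(\cdot)(1-e_\iota)^{1/2}$, and shows $\lim_\iota\|\Psi_\iota\|_{\operatorname{cb}}=1$ by packaging would-be counterexamples $x_\iota$ into $C=\oplus_\iota M_{n_\iota}(\C)$, using the conditional expectation of $\BB(\HH)$ onto $C$ together with hypothesis (iii), Theorem C and Lemma 1 to identify the min and max norms over $A$, $B$ and $J$ against $C$, and exactness of the max sequence; the conclusion then follows from the perturbation argument ending Proposition 6.8 of \cite{Was}. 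To salvage your outline you would have to actually prove the converse of Lemma 3(i) (essentially Theorem 3.2 of \cite{EH}) and replace your reduction by a correct one, for instance the paper's codomain reduction.
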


\begin{proof}
${\rm (i)} \Rightarrow {\rm (ii)}$ Obvious.

${\rm (ii)} \Rightarrow {\rm (iii)}$ Follows from Corollary 4 and its subsequent remark.

${\rm (iii)} \Rightarrow {\rm (i)}$ Let $\Phi:A\rightarrow B/J$ be a ucp map, $E\subset A$ be a finite-dimensional
operator system and $\pi:B\rightarrow B/J$ be the quotient morphism.

We can always assume $B=C^*(F)$. For, let $F$ be a free group such that there exists a morphism
$\pi_0$ from $C^*(F)$ onto $B$. If there exists a ucp map $\Psi:E\rightarrow C^*(F)$ such that
$\pi \pi_0 \Psi =\Phi\vert_E$, then $\widetilde{\Psi}=\pi_0 \Psi :E\rightarrow B$ is a ucp map and
$\pi \widetilde{\Psi} =\Phi \vert_E$.

In the remainder we take $B=C^*(F)$. The idea of proof is from Theorem 3.2, \cite{EH} (see also
Proposition 6.8, \cite{Was}). Fix a central approximate unit $\{ e_\iota\}_\iota$ for $B$ in $J$
(\cite{Ped}) and a linear self-adjoint map $\Psi:E\rightarrow B$ with $\pi\Psi =\Phi\vert_E$.
The map $\Psi_\iota (x)=(1-e_\iota)^{1/2} \Psi(x) (1-e_{\iota})^{1/2}$, $x\in E$, fulfills
$\pi\Psi_\iota=\Phi\vert_E$ and $\| \Psi_\iota \|_{\operatorname{cb}} \geqslant 1$ for all $\iota$.

The liftability of $\Phi\vert_E$ follows as at the end of the proof of Proposition 6.8 in \cite{Was},
once we prove that $\lim\limits_\iota \| \Psi_\iota \|_{\operatorname{cb}} =1$. Assume that
$\limsup\limits_\iota \| \Psi_\iota\|_{\operatorname{cb}} >1$. Passing eventually to a subnet, we get $\varepsilon >0$ such that
$\| \Psi_\iota \|_{\operatorname{cb}} \geqslant 1+\varepsilon$ for all $\iota$. Choose $n_\iota \in \N^*$ and
$x_\iota \in E\underset{\operatorname{min}}{\otimes} M_{n_\iota} (\C)$, $\| x_\iota \| \leqslant 1$ such that
\begin{equation*}
\big\| \big( \Psi_\iota \otimes \operatorname{id} \big) (x_\iota) \big\| \geqslant 1+\frac{\varepsilon}{2} .
\end{equation*}

Let $C=\underset{\iota}{\oplus} M_{n_\iota} (\C)$. Set $x=(x_\iota)_\iota\in \underset{\iota}{\oplus}
\big( E \underset{\operatorname{min}}{\otimes} M_{n_\iota} (\C)\big) =E\underset{\operatorname{min}}{\otimes} C$. Then
\begin{equation}\label{eq4}
\big\| \big(\Psi_\iota \otimes \operatorname{id}_C\big) (x) \big\|_{B\underset{\operatorname{min}}{\otimes} C} \geqslant
\big\| \big(  \Psi_\iota \otimes \operatorname{id}_{n_\iota} \big) (x_\iota) \big\| \geqslant 1+\frac{\varepsilon}{2} \quad \mbox{\rm for all $\iota$}
\end{equation}
and there exists $x\in E\odot C$, $\| x\|_{A\underset{\operatorname{min}}{\otimes} C} \leqslant 1$ such that \eqref{eq4} is fulfilled.
Using the basic properties of central approximate units we get as in Proposition 6.8, \cite{Was}:
\begin{equation}\label{eq5}
\lim_\iota \big\| \big( \Psi_\iota \otimes \operatorname{id}_C \big) (x) \big\|_{B\underset{\operatorname{min}}{\otimes} C} =
\underset{a\in J\underset{\operatorname{\operatorname{min}}}{\otimes} C}{\inf} \big\| \big( \Psi \otimes \operatorname{id}_C \big) (x) +a\big\|_{B\underset{\operatorname{min}}{\otimes} C} .
\end{equation}

The existence of a conditional expectation from $\BB(\HH)$ onto $C$ and the equalities
$A\underset{\operatorname{max}}{\otimes} \BB(\HH)= A\underset{\operatorname{min}}{\otimes} \BB(\HH)$ and
$B\underset{\operatorname{max}}{\otimes} \BB(\HH) =B\underset{\operatorname{min}}{\otimes} \BB(\HH)$ imply
$A\underset{\operatorname{max}}{\otimes} C=A\underset{\operatorname{min}}{\otimes} C$ and respectively
$B\underset{\operatorname{max}}{\otimes} C=B\underset{\operatorname{min}}{\otimes} C$. By Lemma 1 we also have
$J\underset{\operatorname{max}}{\otimes} C=J\underset{\operatorname{min}}{\otimes} C$. These equalities together with \eqref{eq5} yield
\begin{equation*}
\lim_\iota \big\| \big( \Psi_\iota \otimes \operatorname{id}_C \big) (x) \big\|_{B\underset{\operatorname{min}}{\otimes} C} =
\underset{a\in J\underset{\operatorname{max}}{\otimes} C}{\inf} \big\| \big( \Psi\otimes \operatorname{id}_C \big) (x)+a\big\|_{B\underset{\operatorname{max}}{\otimes} C} .
\end{equation*}

The last equality and the exactness of
\begin{equation*}
0\rightarrow J\underset{\operatorname{max}}{\otimes} C \rightarrow B\underset{\operatorname{max}}{\otimes} C \rightarrow A\underset{\operatorname{max}}{\otimes} C \rightarrow 0
\end{equation*}
finally imply
\begin{equation*}
\begin{split}
\lim_\iota \big\| \big( \Psi_\iota \otimes \operatorname{id}_C \big) (x) \big\|_{B\underset{\operatorname{min}}{\otimes} C} &
= \big\| (\pi\otimes \operatorname{id}_C )(\Psi \otimes \operatorname{id}_C)(x)\big\|_{B/J \underset{\operatorname{min}}{\otimes} C} \\
& =\big\| (\Phi \otimes \operatorname{id}_C \big) (x) \big\|_{B/J \underset{\operatorname{max}}{\otimes} C} \\
& \leqslant \| x\|_{A\underset{\operatorname{max}}{\otimes} C} =\| x\|_{A\underset{\operatorname{min}}{\otimes} C} \leqslant 1,
\end{split}
\end{equation*}
which contradicts \eqref{eq4}.
\end{proof}

In the second part we study the behaviour of these properties with respect to the full free $C^*$-product,
using the result from \cite{Bo1} (see also \cite{Bo2} and \cite{Boz}).

\begin{proposition}
Let $(A_i)_{i\in I}$ be a family of unital $C^*$-algebras such that $\operatorname{id}_{A_i}$ is liftable for any $i\in I$.
Then the identity map on $A=\underset{i\in I}{\ast} A_i$, the full free $C^*$-algebra with amalgamation over $\C 1$, is liftable.
In particular $A\underset{\operatorname{nor}}{\otimes} M =A\underset{\operatorname{max}}{\otimes} M$ for any von Neumann algebra $M$.
\end{proposition}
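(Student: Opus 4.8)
The plan is to construct an explicit free-group presentation $A=C^*(F)/J$ together with a ucp section of the quotient morphism, and then to quote Corollary 4.

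First I would record, for each $i\in I$, the data behind the hypothesis that $\operatorname{id}_{A_i}$ is liftable (cf.\ Remark 5(ii)): a free group $F_i$, a surjective morphism $q_i\colon C^*(F_i)\to A_i$, and a ucp map $s_i\colon A_i\to C^*(F_i)$ with $q_is_i=\operatorname{id}_{A_i}$. I would then set $F=\underset{i\in I}{\ast}F_i$, which is again a free group, and use the universal property of full group $C^*$-algebras to identify $C^*(F)=\underset{i\in I}{\ast}C^*(F_i)$, the full free product with amalgamation over $\C 1$. Under this identification the $q_i$ assemble, by the universal property of the full free product for $\ast$-morphisms, into a surjective morphism $q\colon C^*(F)\to A$ whose restriction to each $C^*(F_i)$ is $q_i$; put $J=\operatorname{Ker}q$, so that $A=C^*(F)/J$.

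Next I would build the lift. Regarding each $s_i$ as a ucp map from $A_i$ into $C^*(F)$, the extension theorem of \cite{Bo1} for ucp maps on full free products produces a ucp map $s\colon A=\underset{i\in I}{\ast}A_i\to C^*(F)$ with $s|_{A_i}=s_i$ for every $i$. It then remains to check that $qs=\operatorname{id}_A$. The composite $qs$ is a ucp self-map of $A$ that restricts to $q_is_i=\operatorname{id}_{A_i}$ on each $A_i$; hence for $a\in A_i$ we have $qs(a^*a)=a^*a=(qs(a))^*(qs(a))$ and, similarly, $qs(aa^*)=(qs(a))(qs(a))^*$, so every $A_i$ lies in the multiplicative domain of $qs$. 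This domain is a unital $C^*$-subalgebra of $A$ and contains all the $A_i$, so it is $A$ itself; thus $qs$ is a unital $\ast$-morphism, and since it fixes the generating set $\bigcup_{i\in I}A_i$ it equals $\operatorname{id}_A$. Consequently $\operatorname{id}_A\colon A\to C^*(F)/J$ is liftable, and the ``in particular'' assertion follows from Corollary 4(ii) applied to this presentation (cf.\ Remark 5(ii)).

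The one substantial point is the construction of $s$: unlike $\ast$-morphisms, completely positive maps are not governed by the universal property of the full free product, so the local lifts $s_i$ cannot be glued by formal nonsense, and it is precisely the ucp extension result of \cite{Bo1} that furnishes $s$. The remaining ingredients — the identification $C^*(F)=\underset{i\in I}{\ast}C^*(F_i)$, the multiplicative-domain argument showing $qs=\operatorname{id}_A$, and the reduction to Corollary 4 — are routine.
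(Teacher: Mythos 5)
Your construction is internally sound as far as it goes: the identification $C^*(F)=\underset{i\in I}{\ast}C^*(F_i)$ for $F=\underset{i\in I}{\ast}F_i$, the gluing of the sections $s_i$ into a ucp map $s$ with $s\vert_{A_i}=s_i$ via \cite{Bo1}, the multiplicative-domain verification that $qs=\operatorname{id}_A$, and the appeal to Corollary 4(ii) together with Remark 5(ii) are all correct, so the ``in particular'' tensor-product conclusion is fully proved. The gap concerns the first assertion. In this paper ``$\operatorname{id}_A$ is liftable'' means: for \emph{every} $C^*$-algebra $B$ and every surjection $\pi:B\rightarrow A$, the identity map $A\rightarrow B/\operatorname{Ker}\pi$ admits a ucp lift; this is exactly how the hypothesis on the $A_i$ is exploited in the paper (one lifts $\operatorname{id}_{A_i}$ through the quotient $\pi^{-1}(A_i)\rightarrow A_i$ determined by the \emph{given}, arbitrary $\pi$), and it is what the paper's proof of the proposition delivers. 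You establish liftability only over the one presentation $q:C^*(F)\rightarrow A$ that you built. This cannot be upgraded for free: to convert a ucp section of $q$ into a lift over an arbitrary $\pi:B\rightarrow A$ you would have to lift the morphism $q$ (equivalently, its generating unitaries) through $\pi$, which fails in general (index obstructions), or else invoke the lifting property of $C^*(F)$ itself — Kirchberg's theorem that this note is explicitly structured to avoid using.

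The repair is immediate and is in fact shorter than the detour through free groups: given an arbitrary surjection $\pi:B\rightarrow A$, use the hypothesis to choose ucp lifts $\Phi_i:A_i\rightarrow \pi^{-1}(A_i)\subset B$ of $\operatorname{id}_{A_i}$, and glue them by the same theorem of \cite{Bo1} — relative to states $\varphi_i$ on $A_i$, on the decomposition $\C 1\oplus\bigoplus A^0_{i_1}\otimes\cdots\otimes A^0_{i_n}$ — into a ucp map $\Phi:A\rightarrow B$ satisfying $\pi\Phi=\operatorname{id}_A$ (checked on reduced words, or by your multiplicative-domain argument). That is precisely the paper's proof; your key ingredient, Boca's free product of ucp maps, is the right one, but it should be applied directly to the arbitrary quotient rather than to a specially constructed free-group presentation.
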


\begin{proof}
Let $B$ be a $C^*$-algebra and $\pi$ a morphism from $B$ onto $A$. Since $\operatorname{id}_{A_i}$ is liftable for all $i$,
there exist ucp maps $\Phi_i :A_i \rightarrow B_i =\pi^{-1}(A_i)\subset B$ with $\pi\Phi_i =\operatorname{id}_{A_i}$.

Let $\varphi_i$ be a state on $A_i$ and $A_i^0$ be the kernel of $\varphi_i$. The algebraic free product with amalgamation over $\C 1$,
$\AA=\underset{i\in I}{\circledast} A_i$, is isomorphic with $\C 1 \oplus \underset{i_1\neq \cdots \neq i_n}{\bigoplus} A^0_{i_1}
\otimes \cdots \otimes A_{i_n}^0$ (as vector spaces). By \cite{Bo1}, the unital linear map $\Phi:\AA \rightarrow B$ defined by
$\Phi(a_1 \cdots a_n)=\Phi_{i_1}(a_1) \cdots \Phi_{i_n}(a_n)$, $a_j \in A_{i_j}^0$, $i_1\neq \cdots \neq i_n$,
extends to a ucp map $\Phi:A\rightarrow B$ such that $\pi\Phi=\operatorname{id}_A$.
\end{proof}

\begin{cor}
Let $(A_i)_{i\in I}$ be a family of unital $C^*$-algebras such that $\operatorname{id}_{A_i}$ is liftable for any $i\in I$.
Then $\Big( \underset{i\in I}{\ast} A_i \Big) \underset{\operatorname{nor}}{\otimes} M=\Big( \underset{i\in I}{\ast} A_i \Big) \underset{\operatorname{max}}{\otimes} M$
for any von Neumann algebra $M$.
\end{cor}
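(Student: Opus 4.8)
The statement to prove is Corollary 8: if $(A_i)_{i\in I}$ is a family of unital $C^*$-algebras with $\operatorname{id}_{A_i}$ liftable for each $i$, then for any von Neumann algebra $M$,
\[
\Big( \underset{i\in I}{\ast} A_i \Big) \underset{\operatorname{nor}}{\otimes} M = \Big( \underset{i\in I}{\ast} A_i \Big) \underset{\operatorname{max}}{\otimes} M .
\]
The plan is simply to chain together the two preceding results. Proposition 7 already does the real work: under exactly the hypothesis that each $\operatorname{id}_{A_i}$ is liftable, it asserts that $\operatorname{id}_A$ is liftable, where $A = \underset{i\in I}{\ast} A_i$. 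Once we know $\operatorname{id}_A$ is liftable, Corollary 4(ii) applies verbatim to $A$ and yields $A \underset{\operatorname{nor}}{\otimes} M = A \underset{\operatorname{max}}{\otimes} M$ for every von Neumann algebra $M$. So the proof is two sentences: invoke Proposition 7 to get liftability of $\operatorname{id}_A$, then invoke Corollary 4(ii).

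The only thing worth spelling out is that Proposition 7 has in fact already concluded ``In particular $A\underset{\operatorname{nor}}{\otimes} M = A\underset{\operatorname{max}}{\otimes} M$ for any von Neumann algebra $M$,'' so Corollary 8 is literally a restatement of the final sentence of Proposition 7, recorded separately for emphasis and ease of reference. There is no obstacle here at all: the content lives entirely in Proposition 7 (which rests on the extension result of \cite{Bo1}) and in Corollary 4(ii) (which rests on \eqref{eq2} and Lemma 3(ii)). If one wanted to give a self-contained derivation rather than a one-line pointer, one would reproduce the argument of Corollary 4(ii): take a free group $F$ with a surjection $\pi_0 : C^*(F) \to A$ and kernel $J_0$; by Remark 5(ii) it suffices that $\operatorname{id}_A : A \to C^*(F)/J_0$ be liftable, which Proposition 7 gives; then combine the exact sequence
\[
0 \to J_0 \underset{\operatorname{max}}{\otimes} M \to C^*(F) \underset{\operatorname{max}}{\otimes} M \to A \underset{\operatorname{max}}{\otimes} M \to 0,
\]
the equality $C^*(F)\underset{\operatorname{max}}{\otimes} M = C^*(F)\underset{\operatorname{nor}}{\otimes} M$ from \eqref{eq2}, Lemma 1(ii) applied to $J_0 \trianglelefteq C^*(F)$, and Lemma 3(ii) to identify $A\underset{\operatorname{nor}}{\otimes} M$ with the quotient norm, exactly as in the proof of Corollary 4(ii).

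Thus the proof proposal is: \emph{By Proposition 7 the identity map on $A = \underset{i\in I}{\ast} A_i$ is liftable. The conclusion then follows from Corollary 4(ii).} The main ``obstacle'' — really a bookkeeping point — is only to make sure the hypothesis of Corollary 4(ii), namely liftability of $\operatorname{id}_A$ (and not merely local liftability), is the one actually supplied by Proposition 7; it is, since Proposition 7's hypothesis is liftability of each $\operatorname{id}_{A_i}$ and its conclusion is liftability of $\operatorname{id}_A$.
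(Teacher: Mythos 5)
Your proof is correct and is exactly the paper's intended argument: Proposition 7 yields liftability of $\operatorname{id}_{\ast_i A_i}$, and Corollary 4(ii) then gives $\big(\ast_i A_i\big)\underset{\operatorname{nor}}{\otimes} M=\big(\ast_i A_i\big)\underset{\operatorname{max}}{\otimes} M$; indeed the paper records this conclusion already in the last sentence of Proposition 7 and states the corollary without further proof. Your bookkeeping check that Proposition 7 supplies genuine (not merely local) liftability is the right point to verify, and it holds.
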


In order to prove the ``local" version of Proposition 7 we have to proceed more carefully.
The following lemma and its corollary are standard (\cite{EH}, see also \cite{Was}).

\begin{lemma}
Let $A$ and $B$ be $C^*$-algebras, $X_1 \subset X_2$ finite dimensional operator systems in $A$ and $\pi$
a morphism from $B$ onto $A$. If there exist ucp liftings
$\Psi_j :X_j \rightarrow B$ of $\operatorname{id}_{X_j}$, $j=1,2$, and $\Psi_1$ extends to a ucp map
$\widetilde{\Psi}_1 :X_2 \rightarrow B$, then for any $\varepsilon >0$ there exists a ucp map
$\Psi:X_2 \rightarrow B$ such that $\pi\Psi =\operatorname{id}_{X_2}$ and
$\big\| \Psi_1 -\Psi\vert_{X_1} \big\| < \varepsilon$.
\end{lemma}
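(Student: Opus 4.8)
The plan is to use a standard perturbation-of-liftings trick, leveraging the fact that $\Psi_2$ already lifts $\operatorname{id}_{X_2}$ and that $\Psi_1$ extends to all of $X_2$. First I would fix a central approximate unit $\{e_\iota\}_\iota$ for $B$ contained in $J=\operatorname{Ker}\pi$ (this exists by Pedersen's result, as used in the proof of Proposition 6). The point is that the maps $x\mapsto (1-e_\iota)^{1/2}\Psi_2(x)(1-e_\iota)^{1/2}$ and $x\mapsto e_\iota^{1/2}\widetilde\Psi_1(x)e_\iota^{1/2}$ (taking the self-adjoint functional-calculus square roots) are completely positive, have completely bounded norm at most $1$, and both still lift the respective identity maps modulo $J$, since $e_\iota\in J$.

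Next I would set
\begin{equation*}
\Psi = e_\iota^{1/2}\,\widetilde\Psi_1(\cdot)\,e_\iota^{1/2} + (1-e_\iota)^{1/2}\,\Psi_2(\cdot)\,(1-e_\iota)^{1/2}
\end{equation*}
for a suitably chosen index $\iota$. This is a sum of two completely positive maps, hence completely positive; it is unital up to a term that goes to $0$ (since $e_\iota^{1/2}\cdot 1\cdot e_\iota^{1/2}+(1-e_\iota)^{1/2}\cdot 1\cdot(1-e_\iota)^{1/2}=1$, so in fact it is exactly unital — good, no renormalization needed); and $\pi\Psi=\operatorname{id}_{X_2}$ because $\pi(e_\iota)=0$ forces $\pi(e_\iota^{1/2})=0$ and $\pi((1-e_\iota)^{1/2})=1$, so $\pi\Psi(x)=\pi\Psi_2(x)=x$. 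Finally, on $X_1$ we compare $\Psi|_{X_1}$ with $\Psi_1$: using that $\widetilde\Psi_1|_{X_1}=\Psi_1$ and the approximate-unit estimate $\|[e_\iota^{1/2},\Psi_1(x)]\|\to 0$ together with $\|(1-e_\iota)^{1/2}\Psi_2(x)(1-e_\iota)^{1/2}-(1-e_\iota)\Psi_1(x)\|\to 0$-type bounds (here one uses that $\Psi_2$ and $\widetilde\Psi_1$ agree modulo $J$ on $X_1$, and that $X_1$ is finite-dimensional so a single $\iota$ works uniformly on the unit ball), one gets $\|\Psi|_{X_1}-\Psi_1\|<\varepsilon$ once $\iota$ is large enough.

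The main obstacle is the last estimate: one must show that $\big\|\,e_\iota^{1/2}\widetilde\Psi_1(x)e_\iota^{1/2} + (1-e_\iota)^{1/2}\Psi_2(x)(1-e_\iota)^{1/2} - \Psi_1(x)\,\big\|$ is small for all $x$ in the unit ball of $X_1$. The idea is to write $\Psi_1(x) = e_\iota^{1/2}\Psi_1(x)e_\iota^{1/2} + (1-e_\iota)^{1/2}\Psi_1(x)(1-e_\iota)^{1/2} + (\text{commutator terms that vanish in the limit})$, using that $e_\iota$ is a central approximate unit and hence asymptotically commutes with the finitely many operators $\Psi_1(x)$, $x$ ranging over a basis of $X_1$; then replace $\Psi_1(x)$ by $\Psi_2(x)$ in the second summand at the cost of $(1-e_\iota)^{1/2}(\Psi_2(x)-\Psi_1(x))(1-e_\iota)^{1/2}$, and observe that $\Psi_2(x)-\Psi_1(x)\in J$ (both lift $x$), so this term is dominated by $\|(1-e_\iota)(\Psi_2(x)-\Psi_1(x))\|\to 0$ by the approximate-unit property in $J$. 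Finite-dimensionality of $X_1$ converts these pointwise limits into a uniform one, completing the argument.
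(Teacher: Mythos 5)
Your proposal is correct and follows essentially the same route as the paper: the lift is exactly $\Psi(x)=(1-e)^{1/2}\Psi_2(x)(1-e)^{1/2}+e^{1/2}\widetilde{\Psi}_1(x)e^{1/2}$ for a well-chosen member $e$ of a central approximate unit for $J$, and the estimate on $X_1$ is obtained just as you describe, from $\Psi_1(x)-\Psi_2(x)\in J$, asymptotic commutation, and a compactness argument using the finite-dimensionality of $X_1$.
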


\begin{proof}
Let $J$ be the kernel of the quotient morphism $\pi:B \rightarrow A=B/J$ and $\{ e_\iota\}_\iota$ be a
central approximate unit for $A$ in $J$. Since $\Psi_1 (x)-\Psi_2 (x) \in J$, $x\in X_1$, we obtain:
\begin{equation*}
\lim_\iota \big\| \Psi_1 (x)-(1-e_\iota)^{\frac{1}{2}} \Psi_2(x) (1-e_\iota)^{\frac{1}{2}} -e_\iota^{\frac{1}{2}}
\Psi_1(x) e_\iota^{\frac{1}{2}} \big\| =0 \quad \mbox{\rm for all $x\in X_1$.}
\end{equation*}

Since $X_1$ is finite dimensional, a standard compactness argument yields an index $\iota_0$
such that $e=e_{\iota_0}$ satisfies:
\begin{equation}\label{eq6}
\big\| \Psi_1 (x) -(1-e)^{\frac{1}{2}} \Psi_2(x) (1-e)^{\frac{1}{2}} -e^{\frac{1}{2}} \Psi_1(x) e^{\frac{1}{2}} \big\| <
\varepsilon \| x\| \quad \mbox{\rm for all $x\in X_1$.}
\end{equation}

Setting $\Psi(x)=(1-e)^{1/2} \Psi_2(x)(1-e)^{1/2}+e^{1/2} \widetilde{\Psi}_1 (x) e^{1/2}$, we obtain a ucp map
$\Psi:X_2 \rightarrow B$ such that $\pi\Psi=\pi\Psi_2 =\operatorname{id}_{X_2}$. The inequality
$\big\| \Psi\vert_{X_1}-\Psi_1 \big\| <\varepsilon$ follows from \eqref{eq6}.
\end{proof}

\begin{cor}
Let $A$ and $B$ be $C^*$-algebras, $\pi$ a morphism from $B$ onto $A$ and $\{ X_n\}_{n\geqslant 1}$ an increasing sequence of finite dimensional
operator systems with the property that for all $n\geqslant 1$ there exist ucp maps $\Phi_n :X_n \rightarrow B$ and
$\widetilde{\Phi}_n :X_{n+1} \rightarrow B$ such that $\pi\Phi_n =\operatorname{id}_{X_n}$ and
$\widetilde{\Phi}_n \vert_{X_n}=\Phi_n$. Let $X$ be the norm closure of $\underset{n\geqslant 1}{\cup} X_n$ in $A$.
Then, there exists a ucp map $\Phi:X\rightarrow B$ such that $\pi\Phi=\operatorname{id}_X$.
\end{cor}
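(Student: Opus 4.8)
I would run a telescoping argument in the spirit of Theorem 3.2 in \cite{EH} (compare also Proposition 6.8 in \cite{Was}). The plan is to construct, by induction on $n$, a ucp lifting $\Theta_n\colon X_n\to B$ (that is, $\pi\Theta_n=\operatorname{id}_{X_n}$) that is almost coherent, in the sense that $\|\Theta_{n+1}|_{X_n}-\Theta_n\|_{\operatorname{cb}}<2^{-n}$, together with a ucp extension $\Lambda_n\colon X_{n+1}\to B$ of $\Theta_n$; the desired lifting $\Phi$ will then be obtained as the pointwise limit of the $\Theta_n$, extended by continuity from $\bigcup_n X_n$ to $X$.

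One starts the induction with $\Theta_1:=\Phi_1$ and $\Lambda_1:=\widetilde\Phi_1$. For the inductive step, assuming $\Theta_n$ and $\Lambda_n$ are at hand, I would apply Lemma 9 to the finite dimensional operator systems $X_n\subset X_{n+1}$, taking $\Psi_1:=\Theta_n$, $\Psi_2:=\Phi_{n+1}$ (the given lifting on $X_{n+1}$), $\widetilde\Psi_1:=\Lambda_n$ and $\varepsilon:=2^{-n}$. This produces a ucp lifting $\Theta_{n+1}\colon X_{n+1}\to B$ with $\|\Theta_{n+1}|_{X_n}-\Theta_n\|_{\operatorname{cb}}<2^{-n}$, of the explicit form $\Theta_{n+1}(\cdot)=(1-e)^{1/2}\Phi_{n+1}(\cdot)(1-e)^{1/2}+e^{1/2}\Lambda_n(\cdot)e^{1/2}$ for a suitable element $e$ of a quasi-central approximate unit for $B$ contained in $\operatorname{Ker}\pi$ (this being exactly the way the map in Lemma 9 is built). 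From the given extension $\widetilde\Phi_{n+1}$ of $\Phi_{n+1}$ and the carried datum $\Lambda_n$ one then manufactures a ucp extension $\Lambda_{n+1}\colon X_{n+2}\to B$ of $\Theta_{n+1}$, using the same $e$, which closes the induction.

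With the $\Theta_n$ in place, for $x\in X_m$ and $n\ge m$ one has $\|\Theta_{n+1}(x)-\Theta_n(x)\|\le 2^{-n}\|x\|$, so $(\Theta_n(x))_{n\ge m}$ is Cauchy; its limit defines a unital map $\Phi_0$ on $\bigcup_n X_n$ which is completely positive, complete positivity being preserved under point-norm limits. Being contractive, $\Phi_0$ extends uniquely to a ucp map $\Phi\colon X\to B$, and from $\pi\Phi_0(x)=\lim_n\pi\Theta_n(x)=x$ for $x\in\bigcup_n X_n$ together with continuity one gets $\pi\Phi=\operatorname{id}_X$.

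The step I expect to be the main obstacle is the construction of the extension $\Lambda_{n+1}$ within the inductive step: it must again be a genuine ucp map defined on all of $X_{n+2}$ and restricting to $\Theta_{n+1}$ on $X_{n+1}$, so that Lemma 9 is applicable again at the next stage. Exploiting the explicit Stinespring-type form of $\Theta_{n+1}$ reduces the task to extending separately the compression by $(1-e)^{1/2}$ of $\Phi_{n+1}$ --- harmless, via the given $\widetilde\Phi_{n+1}$ --- and the compression by $e^{1/2}$ of $\Lambda_n$; keeping this second ingredient under control as $n$ grows is the technical heart of the argument, and the place where the standing hypothesis on the maps $\widetilde\Phi_n$ is really used.
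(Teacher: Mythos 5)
Your overall strategy is the one the paper itself uses: iterate Lemma 9 to produce ucp liftings $\Psi_n$ of $\operatorname{id}_{X_n}$ with $\big\|\Psi_{n+1}\vert_{X_n}-\Psi_n\big\|\leqslant 2^{-n}$, then telescope and extend the pointwise limit by continuity; your limit argument is fine and coincides with the paper's second sentence. The problem is the inductive step, and it is not a routine verification that can be deferred. Lemma 9 can be applied to the pair $X_{n+1}\subset X_{n+2}$ only if the lifting you already hold on $X_{n+1}$, namely $\Theta_{n+1}$, admits a ucp extension to $X_{n+2}$ \emph{with values in $B$}. For the given liftings $\Phi_{n+1}$ this is exactly what $\widetilde\Phi_{n+1}$ supplies, but for $\Theta_{n+1}(x)=(1-e)^{1/2}\Phi_{n+1}(x)(1-e)^{1/2}+e^{1/2}\Lambda_n(x)e^{1/2}$ the second summand forces you to extend $\Lambda_n$ from $X_{n+1}$ to $X_{n+2}$ inside $B$, and no such extension is available: the hypotheses give only one-step extensions of the $\Phi_n$, Arveson's theorem extends $\Lambda_n$ only into $\BB(\HH)$, and compressing a $\BB(\HH)$-valued extension by $e^{1/2}$ (with $e\in\operatorname{Ker}\pi$) need not land in $B$, let alone in $\operatorname{Ker}\pi$, which is what the proof of Lemma 9 requires of the correction term. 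The difficulty is already visible at the base of your recursion: $\Lambda_1=\widetilde\Phi_1$ is defined only on $X_2$, and nothing in the data extends it to $X_3$.

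Note also that the obvious way to sidestep this --- apply Lemma 9 at each stage to the given triple $(\Phi_n,\Phi_{n+1},\widetilde\Phi_n)$ --- yields liftings on $X_{n+1}$ that are close to $\Phi_n$ on $X_n$, not to the previously constructed map; since the $\Phi_n$ themselves need not be mutually coherent, this does not produce a Cauchy sequence. So the step you call ``the technical heart'' is a genuine missing piece, not a detail: as written, your induction does not close. In fairness, the paper's own proof is equally laconic exactly here --- it simply asserts that Lemma 9 furnishes the almost-coherent sequence --- so you have correctly located where the real content of the corollary lies; but a complete argument must either strengthen what is carried along the induction so that the constructed lifting visibly extends using only the given data, or work with a quantitative approximate-extension version of Lemma 9, and the straightforward attempts at either run into the same circularity you yourself point out.
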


\begin{proof}
By the previous lemma there exist ucp maps $\Psi_n :X_n \rightarrow B$ such that $\pi\Psi_n =\operatorname{id}_{X_n}$ and
$\big\| \Psi_{n+1}\vert_{X_n} -\Psi_n \big\| \leqslant 2^{-n}$. For any $x\in \underset{n\geqslant 1}{\cup} X_n$, the
sequence $\{ \Psi_n(x)\}_{n\geqslant 1}$ is Cauchy and defines a ucp lifting $\Phi(x)=\lim\limits_n \Psi_n(x)$ which
extends to a ucp map on $X$.
\end{proof}

\begin{prop11}
Let $\{ A_i\}_{i\in I}$ be a family of $C^*$-algebras such that $\operatorname{id}_{A_i}$ is locally
liftable for all $i\in I$. Then $\operatorname{id}_{\underset{i\in I}{\ast} A_i}$ is locally liftable.
\end{prop11}

\begin{proof}
For each $i\in I$, fix a state $\varphi_i$ on $A_i$ and denote by $A_i^0$ the kernel of $\varphi_i$.
Let $A=\underset{i\in I}{\ast} A_i$ be the free $C^*$-product with amalgamation over $\C 1$, $\pi$ be a morphism from $B$ onto $A$, where $B$ is a unital
$C^*$-algebra acting faithfully on a Hilbert space $\HH$. Let $X\subset A$ be a finite dimensional operator system.
It is plain to check that there exists a countable subset $I_0 \subset I$ such that
$X\subset \underset{i\in I_0}{\ast} A_i$. Moreover, there exists
an increasing sequence of finite dimensional operator systems
$\{ X_n\}_{n\geqslant 1}$ with $X_n \subset \C 1 \oplus \underset{\substack{i_1\neq\cdots \neq i_m \\ m\leqslant n}}{\bigoplus} A_{i_1}^0 \otimes \cdots
\otimes A_{i_m}^0$ and $X \subset \overline{\underset{n\geqslant 1}{\cup} X_n}$.

According to the previous corollary, it suffices to prove that for any $n\geqslant 1$, there exist ucp maps
$\Phi_n :X_n \rightarrow B$ and $\widetilde{\Phi}_n :X_{n+1} \rightarrow B$ such that $\pi\Phi_n =\operatorname{id}_{X_n}$ and
$\widetilde{\Phi}_n \vert_{X_n}=\Phi_n$.

Fix $n\geqslant 1$. Then, there exist a finite set $F\subset I_0$ and finite dimensional operator systems
$Y_i=\C 1+Y_i^0 \subset A_i$, $i\in I$, such that for all $k\leqslant n+1$:
\begin{equation*}
X_k \subset Y_{n+1} =\C 1 +\sum\limits_{\substack{i_1\neq\cdots\neq i_m \\ i_j \in F, m\leqslant n+1}} X_{i_1}^0 \cdots X_{i_m}^0 =
\C 1 \oplus \bigoplus\limits_{\substack{i_1\neq\cdots \neq i_m \\ i_j \in F, m\leqslant n+1}}
X_{i_1}^0 \otimes \cdots \otimes X_{i_m}^0 .
\end{equation*}

Each $\operatorname{id}_{X_i}$ is liftable, hence there exist ucp maps $\Psi_i:X_i \rightarrow B$ such that
$\pi\Psi_i=\operatorname{id}_{X_i}$, $i\in I$. They extend by Arveson's Theorem (\cite{Arv}) to ucp maps
$\widetilde{\Psi}_i :A_i \rightarrow \BB(\HH)$. The unital map defined on the algebraic free product
$\underset{i\in I}{\circledast} A_i$ by $\Psi(a_1\cdots a_m)=\widetilde{\Psi}_{i_1}(a_1)\cdots \widetilde{\Psi}_{i_m}(a_m)$,
$a_j \in A_{i_j}^0$, $i_1\neq \cdots \neq i_m$, extends (\cite{Bo1},\cite{Bo2}) to a ucp map
$\Psi: \underset{i\in I}{\ast} A_i \rightarrow \BB(\HH)$. By the very definition of $\Psi$ we have
$\Psi(Y_{n+1}) \subset B$ and $\pi\Psi \vert_{Y_{n+1}} =\operatorname{id}_{Y_{n+1}}$. Since $Y_{n+1}$ contains both $X_n$ and $X_{n+1}$, we may take
$\Phi_n =\Psi\vert_{X_n}$, $\widetilde{\Phi}_n =\Psi\vert_{X_{n+1}}$, and apply Corollary 10.
\end{proof}

\begin{cor12}
Let $(A_i)_{i\in I}$ be a family of unital $C^*$-algebras such that $\operatorname{id}_{A_i}$ is locally liftable for all $i\in I$.
Then the $C^*$-free product $\underset{i\in I}{\ast} A_i$ with amalgamation over $\C 1$ has {\rm LLP}. In particular
$\Big( \underset{i\in I}{\ast} A_i \Big) \underset{\operatorname{min}}{\otimes} \BB(\HH) =
\Big( \underset{i\in I}{\ast} A_i \Big) \underset{\operatorname{max}}{\otimes} \BB(\HH)$.
\end{cor12}

In the last part of this note we prove that quasidiagonality is preserved under full free $C^*$-products with amalgamation over $\C 1$.

\begin{prop13}
If $A_1$ and $A_2$ are quasidiagonal $C^*$-algebras, then $A=A_1 \underset{\C 1}{\ast} A_2$ is quasidiagonal.
\end{prop13}

\begin{proof}
By Theorem 1, \cite{Voi}, $A$ is quasidiagonal if and only if for any $\varepsilon >0$ and any finite subset $F\subset A$,
there exist a finite dimensional $C^*$-algebra $B$ and a ucp map $\Phi:A\rightarrow B$ such that
\begin{equation}\label{eq7}
\| \Phi (a)\| > \| a\| -\varepsilon \quad \mbox{\rm for $a\in F$}
\end{equation}
\begin{equation}\label{eq8}
\| \Phi (b)\Phi(c) -\Phi (bc) \| <\varepsilon \quad \mbox{\rm for $b,c\in F$.}
\end{equation}

We can make the following two straightforward reductions:

(I) assume that $F\subset \AA=A_1 \underset{\C 1}{\circledast} A_2 =\C 1 \oplus \underset{i_1 \neq\cdots\neq i_n}{\bigoplus}
A_{i_1}^0 \otimes \cdots \otimes A_{i_n}^0$ as vector spaces, where $A_i^0=\operatorname{Ker} \varphi_i$ and $\varphi_i$ denotes a (fixed) state
on $A_i$, $i=1,2$.

(II) prove for any $\varepsilon >0$, $F\subset \AA$ finite subset and $a\in F$ the existence of a
finite dimensional $C^*$-algebra $B_a$ and of a ucp map $\Phi_a:A\rightarrow B_a$ such that
\begin{equation*}
\begin{split}
& \| \Phi_a (a)\| > \| a\|-\varepsilon \\
& \| \Phi_a(b)\Phi_b (c)-\Phi_a(bc)\| <\varepsilon \quad \mbox{\rm for $b,c\in F$.}
\end{split}
\end{equation*}
For, note that \eqref{eq7} and \eqref{eq8} are fulfilled by $\Phi:A\rightarrow B =\underset{a\in F}{\oplus} B_a$,
$\Phi(x)=\underset{a\in F}{\oplus} \Phi_a(x)$.

To prove (II), fix a faithful representation $\pi:A\rightarrow \BB (\HH)$, a finite set $F\subset \AA$
and $\delta >0$. Let $\xi_0$ be a unit vector in $\HH$ such that $\| \pi(a)\xi_0\| > \| a\| -\delta$.
Each $b\in F$ decomposes into a finite sum
\begin{equation}\label{eq9}
b=\alpha_0 1 +\sum_{i_1\neq\cdots\neq i_n} b_{i_1} \cdots b_{i_n},\quad
\alpha_0 \in \C, \ b_j\in A_{i_j}^0,\  i_1 \neq \cdots \neq i_n .
\end{equation}

Denote by $F_j$, $j=1,2$, the set of such elements of $A_j^0$ which appear in the decomposition of
elements from $F$. The sets $F_j$ are finite, for we convene to choose only one such decomposition for each $b\in F$.
Consider also the finite dimensional subspace $\HH_0$ of $\HH$ spanned by $\xi_0$ and vectors
$\pi(b_{i_1}\cdots b_{i_n})\xi_0$, with $b\in F$ and $b_{i_1}\cdots b_{i_n}$ as in \eqref{eq9}.

The representations $\pi_j=\pi\vert_{A_j}$, $j=1,2$ are faithful, hence $\pi_j(A_j)$ are quasidiagonal and
there exist $\HH_1 \subset \HH_2$ subspaces of $\HH$ such that $d_j=\operatorname{dim} \HH_j <\infty$,
$\HH_0 \subset \HH_1$ and $\big\| \big[ P_{\HH_j} ,\pi_j (x)\big] \big\| <\delta$ for all $x\in F_j$.
Since $\HH_1 \subset \HH_2$, there exist mutually orthogonal projections
$e_1=P_{\HH_1},e_2,\ldots,e_{d_2}$ and mutually orthogonal projections $f_1=P_{\HH_2},f_2,\ldots,f_{d_1}$
such that $e_i f_j=f_j e_i$ and $\sum\limits_{i=1}^{d_2} e_i =\sum\limits_{j=1}^{d_1} f_j =e$. Let
$v_1,\ldots,v_{d_2}$ and $w_1,\ldots,w_{d_1}$ be partial isometries such that $v_i^* v_i=e_1$,
$v_iv_i^*=e_i$, $w_j^* w_j=f_1$, $w_j w_j^*=f_j$. Set $\KK=e\HH$ and consider the ucp maps
$\Phi_j :A_j \rightarrow \BB(\KK)$ defined by
\begin{equation*}
\begin{split}
& \Phi_1 (x)=\sum_{i=1}^{d_2} v_i \pi_1(x)v_i^*,\quad x\in A_1 \\
& \Phi_2(y) =\sum_{j=1}^{d_1} w_j \pi_2 (y) w_j^* ,\quad y\in A_2 .
\end{split}
\end{equation*}

We have
\begin{equation}\label{eq10}
\| \Phi_j (x)\Phi_j (y)-\Phi_j (xy)\| <\delta\quad \mbox{\rm for all $x,y\in F_j$.}
\end{equation}

By \cite{Bo1}, the unital linear map $\Phi:\AA =\C 1 \oplus \underset{i_1\neq\cdots\neq i_n}{\bigoplus}
A_{i_1}^0 \otimes \cdots \otimes A_{i_n}^0 \rightarrow B_a =\BB(\KK)$, $\Phi(x_1\cdots x_m)=\Phi_{i_1}(x_1) \cdots \Phi_{i_m} (x_m)$,
$x_j \in A_{i_j}^0$, $i_1\neq\cdots\neq i_m$, extends to a ucp map $\Phi:A\rightarrow \BB(\KK)$. We have
\begin{equation*}
\big\| \Phi(a) (\xi_0 \oplus \underbrace{0_{\HH_1} \oplus \cdots \oplus 0_{\HH_1}}_{d_2 -1} )\big\| =\| \pi (a)\xi_0 \| > \| a\| -\delta .
\end{equation*}

Moreover, by \eqref{eq10} and the definition of $\Phi$ we obtain
\begin{equation*}
\| \Phi(b)\Phi(c) -\Phi(bc)\| < M\delta ,
\end{equation*}
where $M$ is a positive integer which depends only on $F$, $F_1$ and $F_2$. Assuming $\delta <\frac{\varepsilon}{M}$ from the beginning,
the proof is complete.
\end{proof}

The following is a plain consequence of the proof of Proposition 13.

\begin{cor14}
If $(A_i)_{i\in I}$ is a family of quasidiagonal $C^*$-algebras, then $\underset{i\in I}{\ast} A_i$ is quasidiagonal.
\end{cor14}

\bigskip

\textbf{\small Acknowledgments}
I am grateful to G. Pisier for sending me an early draft of the paper (\cite{Pis}) and to G. Pisier and S. Wassermann for useful conversations.

\medskip

\begin{center}
\emph{Research supported by an EPSRC Advanced Fellowship}
\end{center}

\end{document}